\definecolor{darkblue}{rgb}{0, 0, .4}
\definecolor{grey}{rgb}{.7, .7, .7}
  \newcommand{\href}[2]{#2}
  \newcommand{\url}[2]{#2}
\newtheorem{theorem}{Theorem}[section]
\newtheorem{lemma}[theorem]{Lemma}
\theoremstyle{definition}
\newtheorem{definition}[theorem]{Definition}
\newtheorem{example}[theorem]{Example}
\newtheorem{warning}[theorem]{Warning}
\theoremstyle{remark}
\numberwithin{equation}{section}
\theoremstyle{theorem}
\newtheorem{corollary}[theorem]{Corollary}
\newtheorem{proposition}[theorem]{Proposition}
\newtheorem{question}[theorem]{Question}
\newcommand{\lf}{\lfloor}
\newcommand{\rf}{\rfloor}
\newcommand{\abacus}[1]{{ \tiny \xymatrix @-2.2pc { #1 } }}
\newcommand{\ci}[1]{{\xy*{ #1 }*\cir<10pt>{}\endxy}}  
\newcommand{\nc}[1]{{\xy*{ #1 }*i\cir<10pt>{}\endxy}}  
\newcommand\opdot[1]{\mathring{#1}}
\newcommand\dunion[0]{\mathop{\dot{\bigcup}}}
\begin{document}

\title{Rational generating series for affine permutation pattern avoidance}

\begin{abstract}
We consider the set of affine permutations that avoid a fixed permutation
pattern.  Crites has given a simple characterization for when this set is
infinite.  We find the generating series for this set using the Coxeter length
statistic and prove that it can always be represented as a
rational function.  We also give a characterization of the patterns for which
the coefficients of the generating series are periodic.  The proofs exploit a
new polyhedral encoding for the affine symmetric group.
\end{abstract}

\author{Brant Jones}
\address{Department of Mathematics and Statistics, MSC 1911, James Madison University, Harrisonburg, VA 22807}
\email{\href{mailto:jones3bc@jmu.edu}{\texttt{jones3bc@jmu.edu}}}
\urladdr{\url{http://educ.jmu.edu/\~jones3bc/}}


\date{\today}

\maketitle


\bigskip
\section{Introduction}\label{s:intro}

The affine symmetric group $\widetilde{S}_n$ is an infinite group that arises
naturally in various geometric, combinatorial, and algebraic contexts.  In this
work, we are concerned with the enumeration of various subsets of this group.
Since the group is infinite, we consider ``refined'' counts of elements based on the
Coxeter length statistic $\ell(w)$ that describes the minimal number of
generators needed to factor $w \in \widetilde{S}_n$ in a certain standard group
presentation of $\widetilde{S}_n$.  We will use the language of {\bf generating
series} to describe our results:  For a given subset $S \subseteq \widetilde{S}_n$, we
form the series $\sum_{w \in S} x^{\ell(w)}$ using a formal variable $x$ and
attempt to find a closed form for this expression.  The associated {\bf enumerating
sequence} is the sequence of coefficients which counts the number of elements of
each given length.  These are related; for example, the enumerating sequence is
given by a linear constant-coefficient recurrence precisely when the generating
series can be expressed as a rational function.

One of the first results in this direction is due to Bott \cite{bott} who
gave a general method to compute the Poincar\'e series that describes the Betti
numbers for the associated compact Lie group.  Combinatorially, this is the
generating series by Coxeter length for the entire group $S = \widetilde{S}_n$.

\begin{theorem}{\cite{bott}}\label{t:bott}
We have
\[ \sum_{w \in \widetilde{S}_n} x^{\ell(w)} = \frac{(1+x)(1+x+x^2)(1+x+x^2+x^3) \cdots (1+x+x^2+x^3+ \cdots +x^{n-1})}{(1-x)(1-x^2)(1-x^3) \cdots (1-x^{n-1})}
\]
\end{theorem}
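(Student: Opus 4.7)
The plan is to exploit the semidirect product structure $\widetilde{S}_n \cong S_n \ltimes Q^\vee$, where $Q^\vee = \{\lambda \in \Z^n : \sum_i \lambda_i = 0\}$ is the coroot lattice, together with the standard parabolic decomposition of $\widetilde{S}_n$ relative to the finite subgroup $S_n$ (generated inside $\widetilde{S}_n$ by the simple reflections $s_1, \ldots, s_{n-1}$). The key Coxeter-theoretic input is that every $w \in \widetilde{S}_n$ admits a unique factorization $w = m\,u$ with $u \in S_n$ and $m$ the minimum-length representative of the right coset $w\,S_n$, and moreover $\ell(w) = \ell(m) + \ell(u)$. Writing $M$ for the set of minimum coset representatives, this factors the generating series as
\[
\sum_{w \in \widetilde{S}_n} x^{\ell(w)} \;=\; \Bigl(\sum_{m \in M} x^{\ell(m)}\Bigr)\Bigl(\sum_{u \in S_n} x^{\ell(u)}\Bigr).
\]

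The second factor is the classical Poincar\'e polynomial of $S_n$, which a standard inversion-counting argument (or an induction on $n$ using the parabolic decomposition with respect to $S_{n-1}$) evaluates to $(1+x)(1+x+x^2)\cdots(1+x+\cdots+x^{n-1})$; this supplies the numerator of the claimed formula. It then remains to prove that
\[
\sum_{m \in M} x^{\ell(m)} \;=\; \prod_{i=1}^{n-1}\frac{1}{1-x^i}.
\]
Since each coset $w\,S_n$ meets the translation subgroup in exactly one point, $M$ is naturally in bijection with $Q^\vee \cong \Z^{n-1}$. The aim is to find a parametrization $M \leftrightarrow \Z_{\geq 0}^{n-1}$, say $m \leftrightarrow (m_1, \ldots, m_{n-1})$, under which $\ell(m) = \sum_{i=1}^{n-1} i\cdot m_i$; expanding each factor $(1-x^i)^{-1}$ as a geometric series in $m_i$ would then yield the denominator.

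The main obstacle is this parametrization and the accompanying length formula. The Iwahori--Matsumoto--Macdonald formula $\ell(u\,t_\lambda) = \sum_{\alpha > 0}|\langle \lambda, \alpha\rangle + \chi(u^{-1}\alpha)|$ (with $\chi$ the indicator of being a negative root) computes $\ell$ for any element in terms of its translation and permutation parts, but minimizing over $u \in S_n$ coset by coset is delicate: the naive guess $m = t_\lambda$ for antidominant $\lambda$ does not exhaust $M$ and produces length generating series with ``wrong'' exponents (of the form $i(n-i)$ rather than $i$), constrained moreover by residues modulo $n$. A conceptually cleaner route, following Bott and Macdonald, is the alcove model: realize $\widetilde{S}_n$ as acting on alcoves in $\R^{n-1}$, compute $\ell(w)$ as the number of affine hyperplanes separating $A_0$ from $w\,A_0$, identify $M$ with a distinguished family of alcoves (e.g.\ those lying in the dominant Weyl chamber), and count these alcoves layer-by-layer according to the $\Z^{n-1}$-action of the translation subgroup. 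The product $\prod_{i=1}^{n-1}(1-x^i)^{-1}$ then emerges as the generating series of this layered enumeration.
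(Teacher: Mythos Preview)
Your overall architecture coincides with the paper's: factor $\widetilde{S}_n$ through the parabolic quotient by $S_n$, use length-additivity to write the Poincar\'e series as a product, take the classical $\prod_{i=1}^{n-1}[i+1]_x$ for the numerator, and reduce to showing that the minimal coset representatives $M = \widetilde{S}_n^\circ$ have generating series $\prod_{i=1}^{n-1}(1-x^i)^{-1}$. So far the approaches agree.

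The gap is in the denominator. You correctly identify what is needed---a bijection $M \leftrightarrow \Z_{\geq 0}^{n-1}$ under which $\ell$ becomes a linear form with coefficients $1,2,\ldots,n-1$---but you do not actually construct it. Your paragraph on the Iwahori--Matsumoto formula explains why the translation parametrization of cosets is the wrong one, and then your alcove paragraph simply asserts that identifying $M$ with dominant alcoves and ``counting layer-by-layer'' will produce the desired product. That last step is the entire content of the denominator formula, and it is not carried out: you have not said what the layers are, why each layer is parametrized by a single nonnegative integer, or why moving one step in the $i$th layer adds exactly $i$ to the length. As written, the proposal is a plan that stops just before the key computation.

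The paper's proof of the denominator (its Corollary for $\widetilde{S}_n^\circ$) supplies precisely the missing bijection, but by a different and more elementary route than alcove geometry. It encodes each $w \in \widetilde{S}_n^\circ$ by an abacus diagram and associates to it the \emph{gap vector} $\opdot{w} = (\opdot{w}_1,\ldots,\opdot{w}_{n-1})$, where $\opdot{w}_i$ is the number of gaps between the $i$th and $(i+1)$st defining beads. It then observes that \emph{every} vector in $\Z_{\geq 0}^{n-1}$ arises uniquely this way, and proves (by a one-line induction on adjacent transpositions) that $\ell(w) = \sum_{i=1}^{n-1}(n-i)\,\opdot{w}_i$. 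Substituting $x_i \mapsto x^{n-i}$ into the encoding series $\prod_i (1-x_i)^{-1}$ for the orthant gives the denominator directly. This is exactly the parametrization you were seeking; the alcove picture is not needed.
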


Although his motivation and proof were topological, it is relatively
straightforward to give a combinatorial proof by induction using the (so-called
``parabolic'') subgroups obtained from subsets of the standard generators (see
\cite[(5.12)]{humphreys}).  These subgroups turn out to be finite symmetric
groups, for which the generating series is given by the numerator in
Theorem~\ref{t:bott}.  We give a new combinatorial proof for the denominator in
Bott's formula in Corollary~\ref{c:bottdenom}.

Recently, Crites gave a natural extension of permutation pattern
avoidance for the affine symmetric group as part of his thesis work with Sara
Billey \cite{billey-crites} to characterize the rationally smooth Schubert
varieties of affine type $A$.  In \cite{crites}, he also enumerated the number
of affine permutations avoiding various fixed patterns, and proved the
following remarkable structure theorem.

\begin{theorem}{\cite{crites}}
Let $p$ be a finite permutation and $n \geq 2$.  There exist only finitely many
affine permutations of size $n$ that avoid $p$ if and only if $p$ avoids the
classical permutation pattern $[321]$.
\end{theorem}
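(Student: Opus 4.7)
The plan is to establish the two implications separately, with the main content in the ``finitely many avoiders'' direction. For the direction that $p$ containing $[321]$ implies infinitely many $p$-avoiders, I would first observe that any $[321]$-avoiding $w \in \widetilde{S}_n$ automatically avoids $p$: a $p$-instance in $w$, restricted to the three positions realizing $[321]$ inside $p$, would produce a $[321]$-instance in $w$. Thus it suffices to exhibit infinitely many $[321]$-avoiders in $\widetilde{S}_n$. For $n \geq 3$ I would use the explicit family
\[ w_k = [1-kn,\; 2,\; 3,\; \ldots,\; n-1,\; n+kn], \quad k \geq 1, \]
whose window is strictly increasing and whose Coxeter lengths are unbounded. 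Any purported decreasing triple $w_k(i_1) > w_k(i_2) > w_k(i_3)$ would have to draw from several windows, but writing $i_j = q_j n + \bar{\imath}_j$ with $\bar{\imath}_j \in \{1,\ldots,n\}$ and $w(i_j) = w(\bar{\imath}_j) + q_j n$, the ordering of the $i_j$ together with the increasing window rules out any such triple by a short case check; an analogous two-entry construction handles $n = 2$.

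For the reverse direction, assume $p$ avoids $[321]$; the goal is a uniform bound $\ell(w) \leq N(p,n)$ over all $p$-avoiding $w \in \widetilde{S}_n$. By the classical patience-sorting decomposition, $p$ is a shuffle of two increasing subsequences, of lengths $a$ and $b$ with $a+b = |p|$. My strategy has two parts: (i) construct a specific $[321]$-avoiding permutation $\sigma$, of size polynomial in $a,b$, containing every shuffle of two increasing sequences of lengths $\leq a, \leq b$, so that in particular $\sigma$ contains $p$; and (ii) show that every $w \in \widetilde{S}_n$ with $\ell(w)$ sufficiently large contains $\sigma$, hence $p$. Part (i) should be handled by an inflated grid pattern---for instance, several copies of a long increasing run arranged in a $2 \times 2$ block layout---together with a direct embedding argument for the relevant shuffles.

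The main obstacle is part (ii). My plan there is an Erd\H{o}s--Szekeres extraction adapted to the affine setting: if $\ell(w)$ is large, then pigeonholing across the $n$ window indices forces some $i \in \{1,\ldots,n\}$ to participate in very many inversions, yielding many $j > i$ with $w(j) < w(i)$. Applying Erd\H{o}s--Szekeres to $\{w(j)\}$ then produces either a long decreasing subsequence---a $[321]$-pattern that with a little extra work boosts to an embedding of $\sigma$---or a long increasing run of values all below $w(i)$. Iterating at shifted window positions and exploiting the quasi-periodicity $w(i+n) = w(i) + n$ to stack such runs at distinct heights produces the grid structure underlying $\sigma$. The delicate point, and the essential use of the affine translational symmetry, is coordinating these local extractions so that the pieces actually combine into a single ordered embedding of $\sigma$ rather than sitting as unrelated substructures; tracking the relative heights and positions across different shifts is what makes this final step work.
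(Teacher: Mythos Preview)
This theorem is not proved in the present paper at all; it is quoted from Crites' work \cite{crites} as background, both in the introduction and again (rephrased in terms of strands) in Section~\ref{s:periodic}, with no argument supplied. There is therefore no ``paper's own proof'' to compare your proposal against.

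On its own merits, your outline is reasonable and is in the spirit of Crites' original argument. The easy direction is fine: your family $w_k$ works for $n\geq 3$, and for $n=2$ one can note that the $\mathbb{Z}$-notation of any element is a shuffle of two increasing arithmetic progressions, so every element of $\widetilde{S}_2$ avoids $[321]$. For the hard direction your two-step plan---produce a universal $[321]$-avoiding $\sigma$ containing $p$, then force $\sigma$ into any sufficiently long $w$---is a sound strategy, but as you yourself flag, part~(ii) is only a sketch. The step where you ``coordinate local extractions so that the pieces actually combine into a single ordered embedding of $\sigma$'' is exactly the substantive content; the Erd\H{o}s--Szekeres extractions you describe live at possibly incomparable positions and heights, and nothing you have written yet guarantees they assemble into the specific block structure of $\sigma$. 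As it stands this is a plausible plan rather than a proof.
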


Even when there are infinitely many affine permutations of size $n$
that avoid a fixed pattern $p$, we can still consider the length generating
series 
\[ F_{p, n}(x) := \sum_{\substack{w \in \widetilde{S}_n \\ w \text{ avoids } p}} x^{\ell(w)}. \]
Such series first appeared in Hanusa and Jones' \cite{HJ}
enumeration of the $[321]$-avoiding affine permutations.  It is shown there that
the coefficients of the length generating series for $p = [321]$ are periodic.
These $[321]$-avoiding affine permutations are also known as the fully
commutative elements of affine type $A$.  More recently, Biagioli, Jouhet, and Nadeau
\cite{FPSAC13} have described the length generating series for fully
commutative elements in other affine types, and they turn out to be periodic
there as well.  In fact, they propose the problem of determining which Coxeter
groups have a periodic generating series associated to their subset of fully commutative
elements.  This would generalize Stembridge's classification \cite{s1}.

In this work, we consider the dual problem of classifying the periodic patterns within
the affine symmetric group.  While any generating series with periodic
coefficients can be expressed as a reduced rational function with denominator $1-x^d$,
it is not obvious that the $F_{p,n}(x)$ series are even rational in general.  
One standard way to show that a counting problem is solved by a
rational generating series is to produce a bijection to directed paths in a
finite graph (or equivalently, words in a regular language).  Stanley \cite{ec1}
refers to this as the ``transfer matrix method.''  In fact, Brink and Howlett
have described a clever finite state automaton that recognizes a canonical reduced
expression for each element of a fixed Coxeter group (see \cite{brink-howlett}
or \cite[Chapter 4]{b-b}); Casselman has also contributed significantly to make
their ideas practical for efficient implementation in software (see
\cite{casselman}, for example).  
We initially attempted to modify these constructions to filter the affine permutations
based on pattern avoidance criteria.  At this stage, however, it appears that pattern
avoidance is not sufficiently related to the group structure for this approach to work in general.

Recently, we have turned instead to a set of ideas based on geometric
convexity.  Consider a rational polyhedron $P$ defined as the set of solutions
in $\mathbb{R}^n$ to a set of linear inequalities with integral coefficients,
and suppose that we would like to count the lattice points in $\mathbb{Z}^n \cap
P$.  To be more general, we consider the {\bf encoding series} 
\[ \mathcal{F}_{P}(x_1, \ldots, x_n) := \sum_{(z_1, \ldots, z_n) \in \mathbb{Z}^n \cap P} x_1^{z_1} x_2^{z_2} \cdots x_n^{z_n} \]
for these points in the formal variables $x_1, \ldots, x_n$.  Brion's formula
(see \cite{beck-robins} or \cite{barvinok}) states that this encoding series
is simply the sum of the encoding series for each of the ``tangent cones''
formed by the rays emanating from a vertex of $P$.  Moreover, it is
straightforward to see (after using inclusion-exclusion if the cones are not
simple) that the encoding series for these tangent cones are all rational, and so any
generating series obtained by specializing the $x_i$ will be rational also.

More precisely, we show in Section~\ref{s:polyhedral} how to coordinatize (the minimal length coset
representatives of) $\widetilde{S}_n$ as the set of lattice points $(z_1,
\ldots, z_{n-1})$ in the nonnegative orthant $\mathbb{Z}_{\geq 0}^{n-1}$ with
Coxeter length given by $\sum_{i=1}^{n-1} (n-i) z_i$.  Enumerating these points
recovers the denominator of Bott's formula.

However, it turns out that the subset of lattice points corresponding to
the $p$-avoiding affine permutations, for a fixed pattern $p$, is not necessarily
convex; see Figure~\ref{f:n3ex}(b).  We then show that it is possible to decompose
$\mathbb{Z}_{\geq 0}^{n-1}$ into a disjoint union of $(n-1)!$ shifted, dilated
cones, each of the form
\[ \mathcal{C}_{b}^n := \{ ( t_1, 2 t_2, \ldots, (n-1) t_{n-1}) + ({b}_1, \ldots, {b}_{n-1}) : t_i \in \mathbb{Z}_{\geq 0} \}. \]
If we restrict to each $\mathcal{C}_b^n$, then we can prove that the
$p$-avoiding affine permutations do form a polyhedral set.  In fact, we give
explicit defining inequalities that include some additional
coordinates for convenience, and then project to the $t$-coordinates that parameterize each
$\mathcal{C}_b^n$.  At the end of this process, we can apply Brion's formula to
compute the enumerating series and conclude that it is rational.

Let us pause to mention that this construction seems likely to be useful in
other contexts.  For example, the Coxeter hyperplane arrangement of affine type
$A_{n-1}$ in $\mathbb{R}^n$ is given by $x_i - x_j = k$ for $1 \leq i < j \leq
n$ and $k \in \mathbb{Z}$.  The complement of these hyperplanes in
$\mathbb{R}^n$ is a collection of regions.  It turns out that these regions are
in bijection with affine permutations, and so enumerating these regions using a
statistic defined by counting the number of hyperplanes that separate a region
from a fixed region at the origin results in the same generating series as
Bott's formula.  There is some recent interest
\cite{armstrong-dh,fishel-vazirani} in statistics and generating series for
regions of the extended Shi arrangements (which are a subarrangements of this
one), and affine pattern avoidance may be a useful tool for refining this
geometric picture.

The generating series we have been considering also arise in certain lattice
path enumeration problems; see \cite{barcucci,barcucci_q_bessel,FPSAC13}.  In
fact, the enumeration for $p = [321]$ in \cite{HJ} used a recursive technique
of Bousquet-M\'elou \cite{MBMcolumnconvex} developed for this context involving
$q$-Bessel functions that, while powerful, leaves the generating series in a
form that is somewhat opaque.  Our decomposition of the coordinate space for
these objects into shifted dilated cones seems likely to offer some new
insights into these types of recursive systems.

Once we know that our $F_{p,n}(x)$ generating series are rational, there are
three possibilities for the sequence of coefficients:  they must be eventually
zero, eventually repeat, or are unbounded.  The first case is characterized by
Crites' theorem, and in Section~\ref{s:periodic}, we begin to characterize the
periodic patterns.  We are aided by the fact that it suffices to characterize
the periodic patterns in a single $\mathcal{C}_b^n$ space, with $n = 3$.  Stated in
terms of classical permutation patterns, our result essentially requires $p$ to
avoid an infinite family of patterns from $S_7, S_8, S_{10}, S_{12},
S_{14}, \ldots$; see Figure~\ref{f:generalform} and Theorem~\ref{t:ptc}.

When $p$ cannot be embedded into any element of $\widetilde{S}_n$ then the
generating series $F_{p,n}(x)$ is simply given by Bott's formula, which is not
periodic (unless $n = 2$).  It remains an open problem to give a
characterization in terms of $p$ for when this occurs.  We do not address this
here although there are standard techniques from convex geometry that can be
applied to the polyhedra we define for any particular pattern of interest.

There are many open directions in this area, for both undergraduate and
professional researchers.  Almost any of the classical problems associated with
permutation patterns, such as classification of Wilf equivalence classes,
pattern packing, or asymptotic behavior, could be posed in the affine setting;
see \cite{bona-book} for an introduction to these classical results.  It would
also be interesting to extend our geometric framework to study bivariate
generating series of the form $\sum_{w \in S \subseteq \widetilde{S}_n}
x^{\ell(w)} y^n$.  Moreover, modifying the geometric framework to handle
multiple patterns would allow us to study the $\{[3412],[4231]\}$ class from
\cite{billey-crites} in detail.

\bigskip
\section{Polyhedral Structure}\label{s:polyhedral}

\bigskip
\subsection{A polyhedral encoding of the affine symmetric group}

An {\bf affine permutation of size $n$} is a bijection $w: \mathbb{Z}
\rightarrow \mathbb{Z}$ satisfying $w(i+n) = w(i) + n$ for all $i \in
\mathbb{Z}$, and $w(1) + w(2) + \cdots + w(n) = 1 + 2 + \cdots + n$.  We refer
to the (infinite) image sequence $(\ldots, w(-2), w(-1), w(0), w(1), \ldots)$
of $w$ as its {\bf $\mathbb{Z}$-notation}.  By the first property, we can
completely specify an affine permutation by its {\bf base-window} $[w(1), w(2),
\ldots, w(n)]$.  When we do this, the $\mathbb{Z}$-notation is obtained by
decomposing the image into {\bf windows} of size $n$, where the $i$th window
contains the entries of the base window with each value in the window shifted
by $in$.  (In this paper, we denote window boundaries with a $|$ symbol.)

The {\bf affine symmetric group}
$\widetilde{S}_n$ consists of all the affine permutations of size $n$, with
composition of functions as the group operation.  It follows directly from the
definitions that $[w_1, w_2, \ldots, w_n]$ is the base-window notation for an
affine permutation if and only if $\sum_{i=1}^n w_i = { {n+1} \choose 2}$ and
the residues $(w_i \mod n)$ are all distinct.

As a group, $\widetilde{S}_n$ is generated by the $n$ adjacent transpositions
of entries in the $\mathbb{Z}$-notation (where each transposition acts on all
windows simultaneously).  The minimal number of such transpositions into which
$w$ can be factored is known as the {\bf Coxeter length} of $w$, denoted
$\ell(w)$.

Given a permutation $p \in S_k$ and an affine permutation $w \in
\widetilde{S}_n$, we say that {\bf $w$ contains the pattern $p$} if there exist
positions $i_1 < i_2 < \cdots < i_k$ whose $\mathbb{Z}$-notation values $w(i_1),
w(i_2), \ldots, w(i_k)$ are in the same relative order as $p_1, p_2, \ldots,
p_k$.  Note that these positions need not be restricted to the base window.

When the entries in the base-window notation for $w$ are sorted increasingly,
we call $w$ a {\bf minimal length coset representative}.  (See
\cite{b-b,humphreys} for motivation and details.)  We will denote the subset of
minimal length coset representatives by $\widetilde{S}_n^\circ \subseteq
\widetilde{S}_n$.  Then each $w \in \widetilde{S}_n^\circ$ corresponds to an
{\bf abacus diagram} as follows.  Begin with an array having $n$ columns and
countably many rows.  Label the entry in the $i$th row and $j$th column of the
array by the integer $j + ni$, where $1 \leq j \leq n$.  In figures, we will
draw the rows increasingly up the page, and columns increasingly from left to
right.  Then these labels linearly order the entries of the array, which we
refer to as {\bf reading order}.  We call the entries $\{1+kn, 2+kn, \ldots,
n+kn\}$ the $k$th {\bf level} of the array.  To create our diagram, we
highlight certain entries in the array; such entries are called {\bf beads} and
will be circled in figures.  Entries that are not beads will be called {\bf
gaps}.  To encode $w$, we let the entries in the array corresponding to the
base-window notation for $w$ be beads, and we refer to these as the {\bf
defining beads}.  To complete the diagram, we create beads at all of the
entries below each defining bead, lying in the same column.  All of the other
entries in the diagram are gaps.  We call this completed diagram the {\bf
abacus diagram for $w$}.

\begin{figure}[ht]
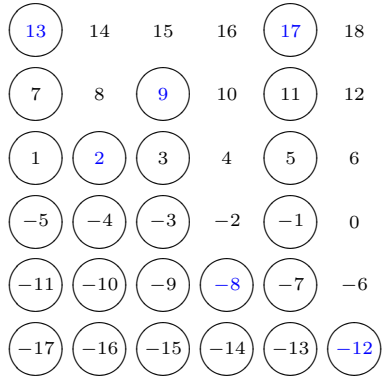
\label{ex:1}
\[
\abacus 
{   
\ci{\color{blue} 13} &  \nc{14} & \nc{15} & \nc{16} & \ci{\color{blue} 17} & \nc{18} \\
\ci{7} &  \nc{8} & \ci{\color{blue} 9} & \nc{10} & \ci{11} & \nc{12} \\
\ci{1} &  \ci{\color{blue} 2} & \ci{3} & \nc{4} & \ci{5} & \nc{6} \\
\ci{-5} & \ci{-4} & \ci{-3} & \nc{-2} & \ci{-1} & \nc{0} \\
\ci{-11} &  \ci{-10} & \ci{-9} & \ci{\color{blue} -8} & \ci{-7} & \nc{-6} \\
\ci{-17} &  \ci{-16} & \ci{-15} & \ci{-14} & \ci{-13} & \ci{\color{blue} -12} \\
}
\]
\caption{An abacus diagram for $w = [-12, -8, 2, 9, 13, 17]$ with $\bar{w} = (4, 10, 7, 4, 4)$ and $\opdot{w} = (0, 3, 3, 2, 3)$.}
\end{figure}

Observe that the defining conditions on the base-window notation imply that the
levels of the defining beads in an abacus diagram must sum to zero.  We refer
to this by saying that the abacus diagram must be {\bf balanced}.  Hence, the
base window notation includes a redundant coordinate.  To remedy this, we can
represent any minimal length coset representative $w$ by its {\bf gap vector}
$\opdot{w} = (\opdot{w}_1, \ldots, \opdot{w}_{n-1})$ where $\opdot{w}_i$
records the number of gaps between the $i$ and $(i+1)$st defining beads in the
abacus for $w$, ordered increasingly.  Alternatively, we may specify $w$ by its
{\bf delta vector} 
\[ \bar{w} = (w_2 - w_1, w_3 - w_2, \ldots, w_n - w_{n-1}). \]
This vector records the number of entries (which may be beads or gaps) in the
abacus diagram between each successive pair of defining beads.

Observe that {\em any} nonnegative integer vector is the gap vector for a
unique abacus diagram.  To see this, simply place the largest defining bead
arbitrarily on the array, and then place each of the smaller defining beads
with consecutive distances as prescribed by the given gap vector.  To balance
the abacus, subtract the sum of the levels of the defining beads from the
position of each defining bead.  The result will be the unique balanced abacus
having the prescribed gap distances between consecutive defining beads.

\begin{proposition}\label{p:coxeter_length}
    The Coxeter length of $w$ is given by $\ell(w) = \opdot{w}_{n-1} + 2\opdot{w}_{n-2} + 3\opdot{w}_{n-3} + \cdots + (n-1)\opdot{w}_{1}$.
\end{proposition}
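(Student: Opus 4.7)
The plan is to start from the standard inversion formula
$$\ell(w) = \#\{(i, J) \in \mathbb{Z}^2 : 1 \le i \le n,\ i < J,\ w(i) > w(J)\}$$
for the Coxeter length on $\widetilde{S}_n$ and to reorganize the sum by indexing inversions via gaps of the abacus diagram for $w$.

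Because $w \in \widetilde{S}_n^\circ$ has a sorted base window, any inversion must satisfy $J > n$. Writing $J = j + mn$ with $1 \le j \le n$ and $m \ge 1$, the identity $w(J) = w(j) + mn$ turns the inversion condition into $w_i - w_j > mn$; this forces $j < i$, since otherwise $w_j \ge w_i$. Hence inversions correspond bijectively to triples $(j, i, m)$ with $1 \le j < i \le n$ and $0 < mn < w_i - w_j$. I then interpret such a triple geometrically: the integer $g := w_j + mn$ shares the residue of $w_j$ modulo $n$ and satisfies $g > w_j$, so by the abacus rule $g$ is a gap lying in the column of $w_j$, and $g < w_i$ by construction. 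Conversely, every gap of the abacus has the form $w_{j(g)} + mn$ for a unique $m \ge 1$, where $j(g)$ denotes the unique index with $g \equiv w_{j(g)} \pmod{n}$.

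Finally, I count by the gap. Given a gap $g$, let $r(g)$ be the largest index with $w_{r(g)} < g$, so $r(g) \ge j(g)$. Since $w_1 < \cdots < w_n$, the defining beads exceeding $g$ are exactly $w_{r(g)+1}, \ldots, w_n$, and each such $i$ automatically satisfies $i > j(g)$, contributing $n - r(g)$ inversions per gap. Gaps with $r(g) = n$ lie above $w_n$ and contribute zero, and no gap has $r(g) = 0$ since $g > w_{j(g)} \ge w_1$ for every gap $g$. Because $\opdot{w}_r$ counts precisely the gaps with $r(g) = r$ for $r \in \{1, \ldots, n-1\}$, summing yields $\ell(w) = \sum_{r=1}^{n-1} (n-r)\, \opdot{w}_r$, which is the claimed formula.

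The main delicate step is verifying the bijection between inversion triples and (bead, gap) pairs; once the residue/column correspondence for the abacus is set up and one observes that no gap can lie at or below $w_1$, everything else is a reindexing. A small example such as $w = [-2, 0, 5, 7] \in \widetilde{S}_4^\circ$ (with $\opdot{w} = (0, 2, 1)$ and $\ell(w) = 5$) can be used as a sanity check to confirm that the triples $(j, i, m)$ correspond to exactly the anticipated gaps in the abacus.
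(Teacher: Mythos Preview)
Your proof is correct. Both you and the paper arrive at the same intermediate characterization---Coxeter length counts pairs $(b,g)$ with $b$ a defining bead and $g$ a gap preceding $b$ in reading order---but you reach it by a different route. The paper sketches an induction on $\ell(w)$: translate the action of the Coxeter generators to the abacus and verify that each length-increasing adjacent transposition creates exactly one new $(b,g)$ pair. You instead start from the standard affine inversion formula and construct an explicit bijection between inversions $(i,J)$ and pairs (gap $g$, defining bead $w_i>g$) via the residue--column correspondence $g = w_j + mn$. Your argument is more self-contained once the inversion formula is granted, and it makes the role of the sorted base window transparent (it is what forces $J>n$ and $j<i$, and guarantees no gap lies below $w_1$). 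The paper's inductive sketch, by contrast, ties the formula directly to the group presentation and would generalize more readily to statements about how individual generators interact with the abacus. Either way, the final regrouping of $(b,g)$ pairs by the index $r(g)$ of the largest defining bead below $g$ is the same, yielding $\sum_{r=1}^{n-1}(n-r)\,\opdot{w}_r$.
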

\begin{proof}
This is a ``folklore'' result that is sometimes stated in a slightly
different form:  To compute the Coxeter length of the element encoded by
an abacus diagram, count the number of pairs $(b, g)$ where $b$ is a
defining bead and $g$ is a gap that preceeds $b$ in reading order.  

Once we translate the action of $\widetilde{S}_{n}$ to the abacus, it is
straightforward to prove this result by induction on $\ell(w)$; simply check
that each length increasing adjacent transposition adds a single new $(b,g)$
pair.
\end{proof}

As a corollary to this development, we may view the (gap vectors of) minimal length coset representatives as lattice points in the nonnegative orthant, which is a prototype for our polyhedral encoding.  
When we enumerate these points with respect to the Coxeter length statistic, we recover the classical result of Bott for type $\widetilde{A}$.  This seems to be a new proof, and it is an open problem to give analogous proofs for the other affine Weyl groups.

\begin{corollary}{\bf (Bott)}\label{c:bottdenom}
We have
    \[ \sum_{w \in \widetilde{S}_n^\circ} q^{\ell(w)} = \frac{1}{(1-q)(1-q^2) \cdots (1-q^{n-1})}. \]
\end{corollary}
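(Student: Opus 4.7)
The proof is essentially a routine unpacking of the preceding development. The plan is to combine two facts already established: first, that the map $w \mapsto \opdot{w}$ is a bijection between $\widetilde{S}_n^\circ$ and the lattice points of the nonnegative orthant $\mathbb{Z}_{\geq 0}^{n-1}$ (since the discussion after the definition of the gap vector shows that any nonnegative integer vector is the gap vector of a unique balanced abacus, hence of a unique minimal length coset representative); and second, the length formula
\[ \ell(w) = (n-1)\opdot{w}_1 + (n-2)\opdot{w}_2 + \cdots + 1 \cdot \opdot{w}_{n-1} \]
from Proposition~\ref{p:coxeter_length}.

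Using these, the sum on the left-hand side can be rewritten as a sum over $\mathbb{Z}_{\geq 0}^{n-1}$:
\[ \sum_{w \in \widetilde{S}_n^\circ} q^{\ell(w)} = \sum_{(\opdot{w}_1,\ldots,\opdot{w}_{n-1}) \in \mathbb{Z}_{\geq 0}^{n-1}} q^{\sum_{i=1}^{n-1} (n-i)\opdot{w}_i}. \]
Since the exponent splits as a sum over independent coordinates, the sum factors as a product of one-dimensional geometric series, one per coordinate:
\[ \prod_{i=1}^{n-1} \sum_{\opdot{w}_i \geq 0} q^{(n-i)\opdot{w}_i} = \prod_{i=1}^{n-1} \frac{1}{1-q^{n-i}} = \frac{1}{(1-q)(1-q^2)\cdots(1-q^{n-1})}, \]
which is the desired formula.

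There is no real obstacle: every ingredient has been set up in the preceding paragraphs. The only thing worth emphasizing in the write-up is the statement ``any nonnegative integer vector is the gap vector for a unique abacus diagram,'' so the reader sees immediately that the sum is indexed by $\mathbb{Z}_{\geq 0}^{n-1}$ with no extra constraints; after that, the computation is a one-line geometric-series factorization. This also makes clear the perspective promised in the introduction, namely that $\widetilde{S}_n^\circ$ is being encoded polyhedrally as the lattice points of the nonnegative orthant with a linear length statistic.
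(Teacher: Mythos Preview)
Your proposal is correct and matches the paper's own proof essentially verbatim: both use the bijection $w \mapsto \opdot{w}$ with $\mathbb{Z}_{\geq 0}^{n-1}$ together with Proposition~\ref{p:coxeter_length}, and then factor the resulting sum as a product of geometric series (the paper phrases this as writing the multivariate encoding series and specializing $x_i \mapsto q^{n-i}$, which is the same computation).
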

\begin{proof}
By the development above, the encoding series for the gap vectors is
\[ \sum_{g \in \mathbb{Z}_{\geq 0}^{n-1}} x_1^{g_1} x_2^{g_2} \cdots x_{n-1}^{g_{n-1}} = \frac{1}{(1-x_1)(1-x_2) \cdots (1-x_{n-1})}. \]
By Proposition~\ref{p:coxeter_length}, we can then obtain the length generating series by substituting $q^{n-i}$ for $x_i$.  This yields the result.
\end{proof}

To uncover the polyhedral structure that will be useful in conjunction with
patterns, we need a further refinement.  We say that an abacus on $n$ columns
is {\bf minimal} if its delta vector uses only entries between $1$ and $n-1$.
For example, the minimal abaci in $n = 4$ are shown below in Figure~\ref{f:minab}.

\begin{figure}[ht]
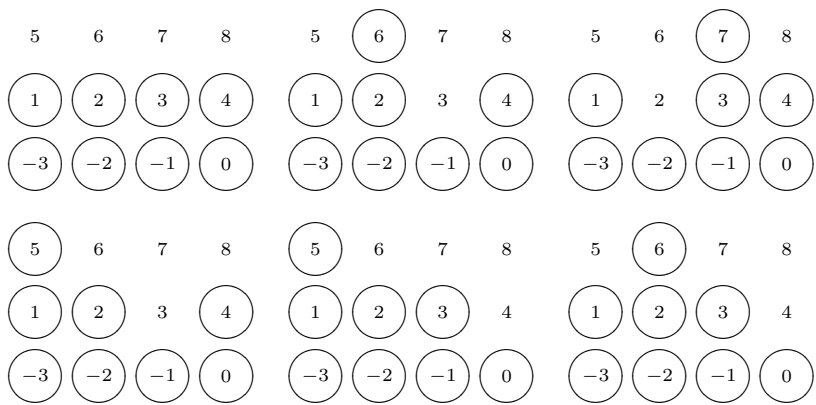

\[
\abacus {   
\nc{5} &  \nc{6} & \nc{7} & \nc{8} \\
\ci{1} &  \ci{2} & \ci{3} & \ci{4} \\
\ci{-3} &  \ci{-2} & \ci{-1} & \ci{0} \\
}
\hspace{0.1in}
\abacus {   
\nc{5} &  \ci{6} & \nc{7} & \nc{8} \\
\ci{1} &  \ci{2} & \nc{3} & \ci{4} \\
\ci{-3} &  \ci{-2} & \ci{-1} & \ci{0} \\
}
\hspace{0.1in}
\abacus {   
\nc{5} &  \nc{6} & \ci{7} & \nc{8} \\
\ci{1} &  \nc{2} & \ci{3} & \ci{4} \\
\ci{-3} &  \ci{-2} & \ci{-1} & \ci{0} \\
}
\]
\[
\abacus {   
\ci{5} &  \nc{6} & \nc{7} & \nc{8} \\
\ci{1} &  \ci{2} & \nc{3} & \ci{4} \\
\ci{-3} &  \ci{-2} & \ci{-1} & \ci{0} \\
}
\hspace{0.1in}
\abacus {   
\ci{5} &  \nc{6} & \nc{7} & \nc{8} \\
\ci{1} &  \ci{2} & \ci{3} & \nc{4} \\
\ci{-3} &  \ci{-2} & \ci{-1} & \ci{0} \\
}
\hspace{0.1in}
\abacus {   
\nc{5} &  \ci{6} & \nc{7} & \nc{8} \\
\ci{1} &  \ci{2} & \ci{3} & \nc{4} \\
\ci{-3} &  \ci{-2} & \ci{-1} & \ci{0} \\
}
\]
\caption{The $(n-1)!$ different minimal abaci in $n=4$}\label{f:minab}
\end{figure}

\begin{proposition}
    There are $(n-1)!$ distinct minimal abaci on $n$ columns.
\end{proposition}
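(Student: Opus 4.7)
The plan is to count minimal abaci by identifying them with their delta vectors and then bijecting those delta vectors with permutations of $\{0,1,\ldots,n-1\}$ having first entry $0$.

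First I would recall from the preceding discussion that a minimal length coset representative is determined by its delta vector $\bar{w} = (d_1, \ldots, d_{n-1})$ together with the balancing condition, and that every nonnegative integer vector arises as the gap vector (and hence delta vector) of a unique balanced abacus. Thus a minimal abacus on $n$ columns is the same data as a vector $(d_1, \ldots, d_{n-1}) \in \{1, \ldots, n-1\}^{n-1}$ whose associated base-window entries have pairwise distinct residues modulo $n$. Taking partial sums $s_0 = 0$ and $s_i = d_1 + \cdots + d_i$ for $1 \leq i \leq n-1$, the distinct-residues requirement becomes the condition that $s_0, s_1, \ldots, s_{n-1}$ are all distinct modulo $n$.

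Next I would set up the bijection. Given a valid delta vector, send it to the sequence $(\bar{s}_0, \bar{s}_1, \ldots, \bar{s}_{n-1})$ of partial sums reduced mod $n$. Since $n$ pairwise distinct residues modulo $n$ must exhaust $\{0, 1, \ldots, n-1\}$, and $\bar{s}_0 = 0$, this sequence is a permutation of $\{0, 1, \ldots, n-1\}$ beginning with $0$. Conversely, given such a permutation $(t_0 = 0, t_1, \ldots, t_{n-1})$, define $d_i$ to be the unique representative of $t_i - t_{i-1}$ in $\{1, 2, \ldots, n-1\}$; this lies in the required range because $t_i \neq t_{i-1}$ mod $n$, and by construction the partial sums of the $d_i$ recover the $t_i$ mod $n$. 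These two operations are mutually inverse.

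Finally, since permutations of $\{0, 1, \ldots, n-1\}$ starting with $0$ are in obvious bijection with permutations of $\{1, 2, \ldots, n-1\}$, there are exactly $(n-1)!$ of them, giving the count. The only step that requires care is checking that the mod-$n$ reduction of partial sums really is the correct reformulation of the distinct-residues hypothesis on the defining beads—once that translation is pinned down, everything else is a short verification, so I do not expect a serious obstacle here.
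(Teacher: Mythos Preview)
Your proof is correct but takes a genuinely different route from the paper's. The paper argues by induction on $n$: given a minimal abacus on $n-1$ columns, one can insert a new column carrying a new largest defining bead in any of $n-1$ positions and rebalance, and every minimal abacus on $n$ columns arises uniquely this way, giving the factor of $n-1$ at each step. Your argument is instead a direct bijection: you identify minimal abaci with delta vectors $(d_1,\ldots,d_{n-1}) \in \{1,\ldots,n-1\}^{n-1}$ whose partial sums are distinct modulo $n$, and then biject these with permutations of $\{0,1,\ldots,n-1\}$ fixing $0$ in the first position via the partial-sum map. Your approach is more explicit and yields a concrete labeling of the minimal abaci by permutations, which could be handy for later bookkeeping; the paper's inductive construction is more geometric and aligns naturally with the abacus picture of adding a column. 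One small expository quibble: the parenthetical ``(and hence delta vector)'' is slightly misleading, since not every positive integer vector is a valid delta vector---only those satisfying the distinct-residues condition---but you immediately impose exactly that condition, so the logic is sound.
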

\begin{proof}
    We argue by induction, the result being clear if $n = 2$.  Assume the formula holds for abaci on $n-1$ columns.  To form a minimal abacus on $n$ columns, we can start with a minimal abacus on $n-1$ columns, insert a new column containing a new largest defining bead in any of $n-1$ distinct positions, and rebalance the resulting $n$ column abacus.  Moreover, every minimal abacus on $n$ columns arises this way.  Hence, the formula holds by induction.
\end{proof}

Given $w \in \widetilde{S}_n^\circ$, we can project $w$ to a minimal abacus by
repeatedly removing multiples of $n$ entries between consecutive defining beads
and then rebalancing the diagram.  We call the minimal abacus obtained in this
way the {\bf bias} of $w$.  Equivalently, the bias $b$ of $w$ is
specified by its delta vector $\bar{b} = (\bar{w}_1 \mod n, \bar{w}_2 \mod n,
\ldots, \bar{w}_{n-1} \mod n)$.  Let  $\mathsf{BIAS}_n$ denote the set of
$(n-1)!$ possible biases on $n$ columns.

\begin{example}
    The bias of the abacus shown in Figure~\ref{ex:1} is given by $\bar{b} = (4,4,1,4,4)$.
\end{example}

\begin{lemma}\label{l:thec}
We can decompose the set of gap vectors into a disjoint union of {\it shifted dilated cones} 
\[ \widetilde{S}_n^\circ \cong \mathbb{Z}_{\geq 0}^{n-1} = \dunion_{b \in \mathsf{BIAS}_n} \mathcal{C}_{b}^n \]
where 
\[ \mathcal{C}_{b}^n := \{ ( t_1, 2 t_2, \ldots, (n-1) t_{n-1}) + (\opdot{b}_1, \ldots,  \opdot{b}_{n-1}) : t_i \in \mathbb{Z}_{\geq 0} \}. \]
\end{lemma}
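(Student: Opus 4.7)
The plan is to exhibit the decomposition as arising from a bijection $\widetilde{S}_n^\circ \leftrightarrow \mathsf{BIAS}_n \times \mathbb{Z}_{\geq 0}^{n-1}$ sending $w$ to $(b, (q_1, \ldots, q_{n-1}))$, where $b$ is the bias of $w$ and $q_i := (\bar{w}_i - \bar{b}_i)/n$. Since every nonnegative integer vector is the gap vector of a unique $w \in \widetilde{S}_n^\circ$, and every $w$ has a unique bias, both sides are naturally in bijection with $\mathbb{Z}_{\geq 0}^{n-1}$. The lemma therefore reduces to proving the single computational identity
\[ \opdot{w}_i \;=\; \opdot{b}_i + i\,q_i \qquad \text{for each } i = 1, \ldots, n-1. \]
Once this is in hand, $\opdot{w}$ lies in $\mathcal{C}_b^n$ with $t_i = q_i$, and conversely any point of $\mathcal{C}_b^n$ determines the $q_i$ (and hence $w$) uniquely.

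To derive the identity, I would first characterize the gaps in reading order: a position $p \in (w_i, w_{i+1})$ is a gap if and only if the unique defining bead in its column lies at some position $\leq w_i$, equivalently $p \bmod n \in A_i := \{w_1 \bmod n, \ldots, w_i \bmod n\}$. Using $\bar{w}_i - 1 = nq_i + (\bar{b}_i - 1)$, partition the $\bar{w}_i - 1$ integers in $(w_i, w_{i+1})$ into $q_i$ consecutive blocks of length $n$ together with one residual block of length $\bar{b}_i - 1$. Any block of $n$ consecutive integers hits each residue mod $n$ exactly once and so contributes $|A_i| = i$ gaps, giving a total contribution of $i\,q_i$ from the full blocks. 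The remaining task is to show that the residual block contributes exactly $\opdot{b}_i$: because $\bar{w}_j \equiv \bar{b}_j \pmod n$ for every $j$, the sequence of residues of the residual block and the residue set $A_i$ versus $B_i := \{b_1, \ldots, b_i\} \bmod n$ are each shifted by the same constant $(w_1 - b_1) \bmod n$; consequently the count of residual positions lying in $A_i$ equals the count for $(b_i, b_{i+1})$ lying in $B_i$, which is $\opdot{b}_i$ (since the bias has $q_i = 0$).

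Given the identity, disjointness of the cones reduces to showing $\opdot{b}_i \in \{0, 1, \ldots, i-1\}$ for every $b \in \mathsf{BIAS}_n$. This holds because $\bar{b}_i - 1 < n$, so the $\bar{b}_i - 1$ integers in $(b_i, b_{i+1})$ have pairwise distinct residues mod $n$, none of which equals $b_i \bmod n \in B_i$; thus at most $|B_i| - 1 = i - 1$ of these residues can lie in $B_i$. Consequently, within any cone $\mathcal{C}_b^n$ the $i$th coordinate is the unique representative of its class modulo $i$ drawn from $\{0, \ldots, i-1\}$, so a point shared by $\mathcal{C}_b^n$ and $\mathcal{C}_{b'}^n$ would force $\opdot{b}_i = \opdot{b'}_i$ for every $i$; since the gap vector determines the abacus, $b = b'$. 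Coverage of $\mathbb{Z}_{\geq 0}^{n-1}$ is automatic: every $v \in \mathbb{Z}_{\geq 0}^{n-1}$ is $\opdot{w}$ for a unique $w$, and the identity places $v$ inside the cone of the bias of $w$.

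The main obstacle I anticipate is the residue-shift accounting at the end of the second paragraph, namely verifying that the offset between the residual block of $w$ and the interval $(b_i, b_{i+1})$ of the bias coincides with the offset relating $A_i$ and $B_i$, so that the two counts agree term by term. This reduces to the congruence $w_{i+1} - b_{i+1} \equiv w_1 - b_1 \pmod n$, which follows from $\bar{w}_j - \bar{b}_j \in n\mathbb{Z}$ for every $j$, but it is the one place where every piece of the setup has to line up exactly; the rest is combinatorial bookkeeping together with the already-established fact that any nonnegative integer vector is the gap vector of a unique abacus.
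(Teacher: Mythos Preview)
Your proposal is correct and proves the same bijection as the paper, but your justification of the key identity $\opdot{w}_i = \opdot{b}_i + i\,q_i$ takes a different path. The paper argues operationally: starting from the abacus of $w$, whenever two consecutive defining beads are separated by more than $n$ entries, delete one full level between them; this removes exactly $i$ gaps from $\opdot{w}_i$ (because such a level carries $n-i$ beads), and iterating until no such pair remains lands on the bias. Reversibility of this level-insertion/deletion process yields both existence and uniqueness in one stroke. You instead compute $\opdot{w}_i$ statically via the residue characterization of gaps, splitting the interval $(w_i,w_{i+1})$ into $q_i$ full residue blocks plus a residual, and then matching the residual count to $\opdot{b}_i$ by the shift $w_j-b_j\equiv w_1-b_1\pmod n$. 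Your route is more explicit and yields the bound $\opdot{b}_i\le i-1$ as a clean byproduct (which makes disjointness transparent without appealing to reversibility), at the cost of the residue-shift bookkeeping you flagged; the paper's level-removal argument is shorter and more visual but leaves that bound and the disjointness implicit in the ``process is reversible'' clause.
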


Note that each element of $\mathcal{C}_b^n$ is shifted by the same vector
$(\opdot{b}_1, \ldots,  \opdot{b}_{n-1})$ that depends only on $b$.  Hence we
will refer to points in $\mathcal{C}_b^n$ by their $t$-coordinates, (ab)using
the notation $(t_1, \ldots, t_{n-1}) \in \mathcal{C}_b^n$.

\begin{proof}
We claim that each gap vector $\opdot{w} = (\opdot{w}_1, \ldots, \opdot{w}_{n-1}) \in \mathbb{Z}_{\geq 0}^{n-1}$
exists in precisely one of the $\mathcal{C}_b^n$ sets.
To see this, draw the abacus associated to the point $w$.  Suppose the $i$ and
$(i+1)$st defining beads have more than $n$ entries between them.  Then we delete
one entire level of the array between them (and then renumber the remaining
entries of the array).  This maneuver removes $i$ gaps from the coordinate $\opdot{w}_i$
(since there will be $n-i$ beads on the level we remove), which is equivalent
to removing $1$ from coordinate $t_i$.  Repeat this process until every
consecutive pair of defining beads is separated by less than $n$ entries, and
then rebalance the abacus (by subtracting the sum of the levels of the defining
beads from the position of each defining bead).  By definition, the result will
be one of the minimal abaci.

Moreover this process is reversible since we can recover $w$ by starting with
the minimal abacus, inserting levels as prescribed by the $t_i$ coordinates,
and rebalancing.  Hence, the point of $\mathcal{C}_b^n$ is unique.
\end{proof}

Until now, we have focused on the minimal length coset representatives
$\widetilde{S}_n^\circ$.  From the length-additive parabolic decomposition in
the theory of Coxeter groups, we have that the base-window notation of each $w
\in \widetilde{S}_n$ can be decomposed into a set of values together with a
``sorting permutation'' $v \in S_n$.  The set of values is represented by some
$u \in \widetilde{S}_n^\circ$, and we have seen that these further decompose
into subsets of elements having the same bias.  The finite permutation $v$ is
the unique finite permutation having entries in the same relative order as the
base-window notation for $w$.  We call $v$ the {\bf flattening} of $w$, and it
follows that $\ell(w) = \ell(u) + \ell(v)$.  

Hence, we can extend our polyhedral embedding to $\widetilde{S}_n$ by simply
taking $n!$ copies of the embedding for $\widetilde{S}_n^\circ$.  Thus
we let $\mathcal{C}^n_{b,v}$ denote the set of $w \in \widetilde{S}_n$ whose
bias is $b$ and whose flattening is equal to $v$.  

\begin{corollary}
    We have the disjoint union
    \[ \widetilde{S}_n \cong \dunion_{\substack{b \in \mathsf{BIAS}_n\\ v \in S_n}} \mathcal{C}_{b,v}^n. \]
\end{corollary}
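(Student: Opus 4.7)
The plan is to combine the parabolic decomposition described in the paragraph immediately preceding the corollary with the decomposition of $\widetilde{S}_n^\circ$ established in Lemma~\ref{l:thec}. Concretely, I would first verify (citing the length-additive parabolic decomposition from \cite{b-b,humphreys}) that the map $w \mapsto (u,v)$, where $u \in \widetilde{S}_n^\circ$ is the minimal length coset representative obtained by sorting the base window of $w$ and $v \in S_n$ is the flattening, is a bijection $\widetilde{S}_n \to \widetilde{S}_n^\circ \times S_n$. Surjectivity is clear since any $(u,v)$ can be reassembled by applying $v$ to the sorted base window of $u$; injectivity follows because the sorted base window and the relative order of the base window together determine the base window, which determines $w$.

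Next I would apply Lemma~\ref{l:thec} to refine the first factor: $\widetilde{S}_n^\circ = \dunion_{b \in \mathsf{BIAS}_n} \mathcal{C}_b^n$. Pairing each such cone with a choice of $v \in S_n$ and noting that $\mathcal{C}_{b,v}^n$ is defined as the set of $w \in \widetilde{S}_n$ whose bias is $b$ and whose flattening is $v$, I would identify the image of $\mathcal{C}_b^n \times \{v\}$ under the inverse of the bijection above with $\mathcal{C}_{b,v}^n$. Since the pair $(b,v)$ is recovered uniquely from $w$ (the bias depends only on $u$, and $v$ is the flattening), the cones $\mathcal{C}_{b,v}^n$ are pairwise disjoint, yielding the stated disjoint union
\[ \widetilde{S}_n = \dunion_{\substack{b \in \mathsf{BIAS}_n\\ v \in S_n}} \mathcal{C}_{b,v}^n. \]

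There is essentially no hard step here: the work is all in Lemma~\ref{l:thec} and in the standard parabolic decomposition of a Coxeter group with respect to the maximal finite parabolic $S_n \subset \widetilde{S}_n$. The only mild subtlety worth mentioning explicitly in the writeup is that one should check the flattening $v$ is well-defined, i.e., that no two base-window entries of $w$ have the same value (this follows from the distinct-residues condition in the definition of affine permutation). Once that is noted, the corollary follows immediately from Lemma~\ref{l:thec} and the parabolic bijection.
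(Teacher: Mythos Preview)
Your proposal is correct and matches the paper's approach exactly: the corollary is stated without a separate proof because it follows immediately from the parabolic decomposition $\widetilde{S}_n \cong \widetilde{S}_n^\circ \times S_n$ discussed just before it, combined with Lemma~\ref{l:thec}. You have simply made explicit what the paper leaves implicit.
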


\begin{example}
In $n = 3$, we can draw the minimal length coset representatives as a set of
lattice points in the plane.  There
are two minimal abaci, given by $\bar{a} = (1,1)$ and $\bar{b} = (2,2)$ with
offsets given by $\opdot{a} = (0,0)$ and $\opdot{b} = (0,1)$, respectively.
Then the gap vectors $\widetilde{S}_3^\circ \cong \mathbb{Z}_{\geq 0}^2$ are a
disjoint union of two cones, where the second coordinate has been dilated by
$2$ and the cones have been shifted by $(0,0)$ and $(0,1)$, respectively.

\bigskip
\begin{center}
\begin{tikzpicture}[scale=0.4]
    \draw [->] (0,0) -- (11,0);
    \draw [->] (0,0) -- (0,11);
    \foreach \x in {0,1,...,9}{
      \foreach \y in {0,1,...,4}{
        \node[draw,circle,inner sep=2pt,fill,violet] at (\x,2*\y) {};
        \node[draw,circle,inner sep=2pt,fill,teal] at (\x,2*\y+1) {};
      }
    }
\end{tikzpicture}
\end{center}
\bigskip

The entire affine symmetric group $\widetilde{S}_3$ consists of six copies of this set of lattice points, one for each choice of flattening.
\end{example}

\bigskip
\subsection{Patterns}

Fix a permutation pattern $p \in S_k$, together with a bias $b$ and flattening
$v$.  We will first explain how to characterize the elements of
$\mathcal{C}_{b,v}^n$ that contain an instance of $p$.  Recall that an instance
of the pattern $p$ in the affine permutation $w$ is a choice of $k$ positions
in the $\mathbb{Z}$-notation for $w$ whose values have the same relative order
as $p$.  To coordinatize this, we consider two pieces of data associated to an
instance:  a strand assignment, and a window assignment.

\begin{definition}
Let the {\bf strand assignment} of an instance be the function $\pi$ assigning
each entry of $p$ to an entry of the base window of $w$, where $\pi(i)=j$ means
that $p_i$ is represented by some positional translation of the $j$th largest
value of the base window (where $j = n$ represents the largest value).
\end{definition}

The set of potential strand assignments for $p$ is finite, consisting of all
sequences of length $k$ with values from $\{1, \ldots, n\}$.  In fact, it is
not difficult to discover a further requirement for strand assignments.

\begin{lemma}
    Let $\pi$ be a strand assignment for $p$.  Then either every inversion in $p$ must correspond to a strict inversion in $\pi$, or else the set of $w$ containing $p$ with strand assignment $\pi$ is empty.
\end{lemma}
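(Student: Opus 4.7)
The plan is to prove the contrapositive by exploiting the arithmetic structure of strands. First I would set up coordinates: write $a_1 < a_2 < \cdots < a_n$ for the sorted base-window values of $w$, let $q_j \in \{1, \ldots, n\}$ be the base-window position of $a_j$, and note that the periodicity $w(i+n) = w(i) + n$ makes strand $j$ equal to the arithmetic progression of positions $q_j + cn$ ($c \in \mathbb{Z}$), carrying values $a_j + cn$. An instance of $p$ in $w$ with strand assignment $\pi$ is then parametrized by a tuple of window-shift integers $(c_1, \ldots, c_k) \in \mathbb{Z}^k$ producing positions $m_i = q_{\pi(i)} + c_i n$ with $m_1 < \cdots < m_k$ and values $w(m_i) = a_{\pi(i)} + c_i n$ in the relative order of $p$.

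The key step is to apply the two order requirements at a single inversion $(i_1, i_2)$ of $p$ and compare. The positional inequality $m_{i_1} < m_{i_2}$ becomes $(q_{\pi(i_2)} - q_{\pi(i_1)}) + (c_{i_2} - c_{i_1}) n > 0$, while the value inequality $w(m_{i_1}) > w(m_{i_2})$ becomes $(a_{\pi(i_2)} - a_{\pi(i_1)}) + (c_{i_2} - c_{i_1}) n < 0$. I expect a clean case split on the sign of $\pi(i_2) - \pi(i_1)$: the equal case collapses both inequalities to identical multiples of $n$ and so cannot give opposite signs; the case $\pi(i_1) < \pi(i_2)$ makes $a_{\pi(i_2)} - a_{\pi(i_1)} > 0$, which forces $c_{i_2} - c_{i_1} \leq -1$ from the value inequality, but then $q_{\pi(i_2)} - q_{\pi(i_1)} \leq n-1$ makes the positional inequality $\leq (n-1) - n < 0$, a contradiction. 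Hence $\pi(i_1) > \pi(i_2)$ strictly.

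The argument is short, so the only real obstacle is bookkeeping: one must keep separate the three roles an index plays — the strand label $\pi(i)$ (rank of the base-window value), the base-window position $q_{\pi(i)}$, and the window-shift integer $c_i$. Once those roles are disentangled, the proof reduces to the one-line case analysis above, and the ``either/or'' wording in the statement corresponds exactly to the observation that whenever the necessary condition on $\pi$ fails, no valid $(c_1, \ldots, c_k)$ can exist for any $w$.
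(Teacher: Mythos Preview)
Your proof is correct and takes essentially the same approach as the paper: assume an inversion $(i_1,i_2)$ in $p$ with $\pi(i_1)\leq\pi(i_2)$ and derive that the representing entries in $w$ would be increasing rather than decreasing. The paper compresses this to a single sentence, while you make the arithmetic explicit by introducing the window-shift integers $c_i$ and doing the two-case analysis; your version is a fully rigorous expansion of the paper's one-line assertion.
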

\begin{proof}
    Suppose $i < j$ and $p_i > p_j$.  If $\pi(i) \leq \pi(j)$ then the elements representing $p_i$ and $p_j$ would necessarily be increasing in $w$, a contradiction.
\end{proof}

\begin{definition}
The {\bf window assignment} of an instance is the vector $(c_1, \ldots,
c_{k-1})$ where $c_i$ is the number of positional window boundaries between the
entries representing $p_i$ and $p_{i+1}$ in $w$.  If $p_i$ and $p_{i+1}$ lie
within the same window then we set $c_i = 0$.  
\end{definition}

\begin{example}\label{e:9411}
Consider the highlighted instance of $p = [24351]$ in the $\mathbb{Z}$-notation for $w = [-9, 4, 11]$:
\[ (\cdots -15, {\bf -2}, {\bf 5} | -12, {\bf 1}, 8 | -9, 4, 11 | -6, {\bf 7}, 14 | {\bf -3}, 10, 17 | \cdots) \]
Then, $\pi = [2,3,2,2,1]$ and $c = (0, 1, 2, 1)$.
\end{example}

It is clear that the strand assignment and the window assignment completely
determine a pattern instance in $w$.  Given an affine permutation $w$ in
$\mathbb{Z}$-notation, we can recover the $t$-coordinates from
$\mathcal{C}_{b}^n$ as follows.

\begin{lemma}\label{l:ztot}
Given $w \in \widetilde{S}_n^\circ$ let $j \geq 0$ be maximal such that $w(i+1) > w(i+jn)$.  Then, $t_i$ equals the number of window boundaries lying between $w(i+1)$ and $w(i+jn)$.
\end{lemma}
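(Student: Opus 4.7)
The plan is to reduce the identity to a routine Euclidean-division computation applied to $\bar{w}_i := w(i+1) - w(i)$, using the level-insertion picture from Lemma~\ref{l:thec} to pin down $t_i$ and a direct inequality argument to pin down $j$.

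First I would identify $j$ explicitly in terms of $\bar{w}_i$. Since $w(i+jn) = w(i) + jn$ by the affine periodicity condition, the inequality in the hypothesis rewrites as $jn < \bar{w}_i$. Because $w \in \widetilde{S}_n^\circ$ has its $n$ defining beads in pairwise distinct residues modulo $n$, the delta $\bar{w}_i$ is never divisible by $n$, so the maximal nonnegative integer $j$ solving $jn < \bar{w}_i$ is exactly $\lfloor \bar{w}_i / n \rfloor$.

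Next I would recompute $t_i$ from the level-insertion description of Lemma~\ref{l:thec}. Starting from the bias $b$ and inserting $t_i$ complete rows between the $i$th and $(i+1)$st defining beads, each inserted row contributes exactly $n$ to the delta in that slot, so $\bar{w}_i = \bar{b}_i + n\, t_i$. Minimality of $b$ forces $\bar{b}_i \in \{1, \ldots, n-1\}$, which gives $t_i = \lfloor \bar{w}_i / n \rfloor$. Comparison with the previous paragraph yields $t_i = j$.

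Finally I would match $j$ with the positional count of window boundaries. For $1 \leq i \leq n-1$ the entry $w(i+1)$ occupies position $i+1 \in \{2, \ldots, n\}$, which lies in the base window (window $0$), whereas $w(i+jn)$ occupies position $i+jn \in \{jn+1, \ldots, jn+n-1\}$, which lies in window $j$. Traversing between these two positions in the $\mathbb{Z}$-notation therefore crosses exactly $j$ window boundaries, completing the argument.

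I do not anticipate a real obstacle; the whole argument is bookkeeping once the identity $\bar{w}_i = \bar{b}_i + n\,t_i$ is in hand from Lemma~\ref{l:thec}. The only care needed is to keep clearly separate the \emph{value} inequalities among the $w(\cdot)$ (used to extract $j$) and the \emph{positional} window boundaries in the $\mathbb{Z}$-notation (used to interpret the count).
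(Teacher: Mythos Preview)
Your argument is correct. Both $j$ and $t_i$ come out to $\lfloor \bar{w}_i / n \rfloor$ exactly as you compute, and the window-boundary count between positions $i+1$ and $i+jn$ is indeed $j$; the example $w=[-9,4,11]$ in the paper checks out line by line against your formulas.

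The paper takes a different route: it argues by induction on the level-insertion process of Lemma~\ref{l:thec}, starting from the minimal abacus (where all $t_i=0$ and there are no cross-window inversions of the required form) and verifying that each insertion of a level between the $i$th and $(i+1)$st defining beads simultaneously increments $t_i$ and creates exactly one new window boundary counted by the statement, while preserving the others. Your approach bypasses the induction entirely by extracting the closed form $\bar{w}_i=\bar{b}_i+nt_i$ from the bias description and matching it against the Euclidean-division reading of the maximality condition on $j$. This is more direct and makes the identity $t_i=j=\lfloor\bar{w}_i/n\rfloor$ completely explicit, which is a nice byproduct; the paper's inductive version, on the other hand, keeps the level-insertion picture in the foreground and so dovetails more smoothly with how the $t$-coordinates are used later in the proof of Theorem~\ref{t:polyhedron}.
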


For Example~\ref{e:9411}, we find $t_2 = 2$ since there are two window boundaries lying between entries $11$ and $10$.  Similarly, $t_1 = 4$.

\begin{proof}
Work by induction starting from a minimal abacus.  In a minimal abacus, there
are no adjacent inversions between windows in $\mathbb{Z}$-notation and all the
$t_i$ are zero.  Each time we add one to a $t_i$ coordinate, we adjust $w$ by
adding $n$ to each of the $n-i$ largest values in the base window, and then
subtracting $n-i$ from each of the values in the base window (to rebalance).
This places one new window boundary that is counted by the description in the
statement, and preserves all of the other window boundaries.
\end{proof}

We next characterize the $t$-coordinates of points in $\mathcal{C}_{b,v}^n$
that contain an instance of $p$ with strand assignment $\pi$.  To accomplish
this, we highlight some data in $(p, \pi)$.

\begin{definition}
    Given a pattern $p \in S_k$ and a strand assignment $\pi$ for $p$, we say that an {\bf upshift} is a pair $j < i$ such that $p_j = p_i + 1$ and $\pi(j) > \pi(i)$.  A {\bf downshift} is a pair $i < j$ such that $p_j = p_i + 1$ and $\pi(i) > \pi(j)$.
\end{definition}

\begin{example}
    Consider $(p, \pi) = ([24351], [2,3,2,2,1])$.  The values $1$ and $2$ in $p$ form an upshift that we denote (positionally) as $(1<5)$.  Values $2$ and $3$ are both assigned to strand $2$, so no shift takes place.  Values $3$ and $4$ form the upshift $(2<3)$, and values $4$ and $5$ form the downshift $(2<4)$.
\end{example}

We are now in a position to prove our main result in this section.

\begin{theorem}\label{t:polyhedron}
For each $(p, \pi)$, the set
\[ \{t = (t_1, \ldots, t_{n-1}) \in \mathcal{C}_{b,v}^n : \text{ $t$ contains an instance of $p$ using strand assignment $\pi$} \} \]
consists of the integer points in a rational polyhedron.
\end{theorem}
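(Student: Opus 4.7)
The approach I would take is to enlarge the variable space. In addition to the $t$-coordinates parameterizing $w \in \mathcal{C}_{b,v}^n$, I introduce the window assignment $c = (c_1, \ldots, c_{k-1})$ describing a potential pattern instance. Once both $t$ and $c$ are fixed, the positions and values of the candidate instance become explicit linear functions of $(t,c)$, and the condition that they really do form an instance of $p$ with strand $\pi$ turns into a finite system of linear inequalities with integer coefficients. Fourier--Motzkin elimination to project out $c$ then produces the desired rational polyhedron in $t$-space.

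To set up these inequalities, I first express the sorted base-window values $\alpha_1 < \alpha_2 < \cdots < \alpha_n$ of $w$ as integer-affine functions of $t$. Using $\bar{w}_l = \bar{b}_l + n\,t_l$ from Lemma~\ref{l:thec}, every difference $\alpha_i - \alpha_j = \sum_{l = j}^{i-1} (\bar{b}_l + n\,t_l)$ is linear in $t$ with integer coefficients, while the overall constant is pinned down by the balance condition $\sum_j \alpha_j = \binom{n+1}{2}$. The $i$-th entry of a candidate instance then sits at position $v^{-1}(\pi(i)) + n\,m_i$ and takes value $\alpha_{\pi(i)} + n\,m_i$, where $m_i := c_1 + \cdots + c_{i-1}$ (so $m_1 = 0$, using translation invariance of the instance to normalize). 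Requiring positions to be strictly increasing in $i$ and requiring values to follow the relative order prescribed by $p$ translates, for each pair $i < j$ in $[k]$, into one or two linear inequalities in $(t,c)$ with integer coefficients. Combined with $t_l, c_l \geq 0$, these inequalities cut out a rational polyhedron $Q \subset \mathbb{R}^{(n-1)+(k-1)}$, and its projection $P$ onto the $t$-coordinates is a rational polyhedron.

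The main obstacle is the integer lifting step: for integer $t \in P$, I must produce an \emph{integer} window assignment $c \geq 0$ with $(t,c) \in Q$, not merely a real one. For this, observe that once $t$ is integer, each value-ordering constraint reduces (after separating $n$ and collecting constants) to an integer bound of the form $L_{ij} \leq \sum_{l=i}^{j-1} c_l \leq U_{ij}$. The coefficient matrix of this interval-sum system in the variables $c_l$ has the consecutive-ones property, and is therefore totally unimodular. Consequently the slice $\{c : (t,c) \in Q\}$ over any integer $t$ is an integral polyhedron: nonemptiness forces the existence of an integer lattice point. This produces the required integer window assignment and identifies the set in question with $P \cap \mathbb{Z}^{n-1}$, the integer points of a rational polyhedron, completing the argument.
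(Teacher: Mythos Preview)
Your approach is essentially the paper's own: enlarge to $(t,c)$-space, write down integral linear inequalities characterizing valid instances, and project back to $t$; the paper records only the inequalities coming from pairs of consecutive $p$-values (its ``upshifts'' and ``downshifts'') rather than all pairs, but by transitivity the two systems cut out the same integer points. One point on which you actually go further than the paper: the paper simply projects the $(t,c)$-polyhedron and stops, never addressing why an integer $t$ in the image must lift to an \emph{integer} window assignment $c$. Your observation that, for fixed integer $t$, the residual constraints on $c$ are interval-sum bounds with integer right-hand sides---hence governed by a consecutive-ones, totally unimodular matrix---cleanly closes that gap.
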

\begin{proof}
We first prove that the set
\[ \{(t, c) \in \mathcal{C}_{b,v}^n \times \mathbb{Z}_{\geq 0}^{k-1} : \text{ $t$ contains an instance of $p$ using strand assignment $\pi$ and window assignment $c$} \} \]
consists of integer points in a rational polyhedron.  Then, we can project onto
the $t$-coordinates to obtain the result.  More precisely, we will give a
collection of integral linear inequalities that the $(t,c)$ coordinates satisfy
exactly when they describe a valid instance of $p$ in the affine permutation
corresponding to $t$.  Then the fundamental Minkowski--Weyl Theorem for convex
polyhedra allows us to describe this set as the Minkowski sum of a (bounded)
polytope and a recession cone of rays.  To perform the projection, we simply
ignore the $c$ coordinates in this latter description.  (See \cite{beck-robins}
or \cite[Lecture 1]{ziegler} for an introduction to these ideas.)

Given a window assignment $c$, we imagine placing the values of $p$ into an
affine permutation $w$ (whose values are to be determined) in increasing order.
Whenever we place a larger value in a position to the right, on the same strand
or higher, we impose no conditions on the $t_i$ because the strands are necessarily
increasing in $w$.  However, if we place a larger value to the left, we must
increase the strand and so this pair of consecutive values is an upshift.
Also, if we place a larger value to the right on a lower strand, this pair of
consecutive values is a downshift.  These do impose conditions on the $t_i$.

By Lemma~\ref{l:ztot}, we have that $t_i$ represents the maximal number of positional window boundaries lying between
(translates of) the $i$th and $(i+1)$st largest elements of the base window that can form an inverted pair.
Similarly, $t_i + t_{i+1} + \cdots t_{j-1} + \lf \bar{b}_{i,j} \rf$ represents the maximal
number of positional window boundaries lying between translates of the $i$th and $j$th
largest elements of the base window that form an inverted pair.  
Here, $\lf \bar{b}_{i,j} \rf = \lf \frac{ \bar{b}(i) + \bar{b}(i+1) + \cdots + \bar{b}(j-1)
}{n} \rf$ represents the (constant) contribution from the bias.

Therefore, when we have an upshift from strand $\pi(i)$ to strand $\pi(j)$, we
must ensure that $t_{\pi(i)} + t_{\pi(i)+1} \cdots + t_{\pi(j)-1} + \lf
\bar{b}_{\pi(i),\pi(j)} \rf$ is large enough to ensure that the entries at
positions $j$ and $i$ are inverted.  These positions are separated by $c_j +
c_{j+1} + \cdots + c_{i-1}$ window boundaries.  

Hence, for each upshift $(j < i)$ we include an inequality of the form
\[ t_{\pi(i)} + t_{\pi(i)+1} + \cdots + t_{\pi(j)-1} + \lf \bar{b}_{\pi(i),\pi(j)} \rf \geq c_{j} + c_{j+1} + \cdots + c_{i-1}. \]
Using similar reasoning for each downshift $(i < j)$, we include an inequality of the form
\[ t_{\pi(j)} + t_{\pi(j)+1} + \cdots + t_{\pi(i)-1} + \lf \bar{b}_{\pi(j),\pi(i)} \rf \leq c_{i} + c_{i+1} + \cdots + c_{j-1} -1. \]

The choice of flattening enters in the initial conditions on $c_i$, which
specify the minimal number of window boundaries between the $i$ and $(i+1)$st
position in the pattern instance.  Since these positions must be increasing, we
include
\[ c_i \geq \begin{cases} 0 & \text{ if $v^{-1}(\pi(i)) < v^{-1}(\pi(i+1))$ } \\ 1 & \text{ otherwise}\end{cases} \]
for each $1 \leq i < k$.  The initial conditions $t_i \geq 0$ should also be included.

Our strategy to place $p$ into the affine permutation will succeed if all of these linear inequalities are satisfied.  If any are not satisfied, then we will have a pair of consecutive values from $p$ whose representatives in the affine permutation do not faithfully represent the pattern.  Hence, the integer points of this polyhedron form precisely the set given in the beginning of the argument.  After projection, we obtain the result.
\end{proof}

\begin{example}
    The linear inequalities obtained for $(p, \pi) = ([24351], [2,3,2,2,1])$ with $\opdot{b} = (0,0)$ and $v = [123]$ are:
    \[ t_1 \geq c_1 + c_2 + c_3 + c_4, \ \ t_2 \geq c_2, \ \  t_2 \leq c_2 + c_3 - 1, \]
    \[ c_1 \geq 0, c_2 \geq 1, c_3 \geq 1, c_4 \geq 1; \ \ t_1 \geq 0, t_2 \geq 0. \]
\end{example}

\begin{definition}
We will refer to the rational polyhedron constructed in the proof of Theorem~\ref{t:polyhedron} by $\mathcal{C}_{b,v}^n(p, \pi)$.
\end{definition}

It turns out that the bias and flattening parameters do not change the polyhedra very much.

\begin{lemma}\label{l:rays}
    Let $b_0$ be the bias given by $\opdot{b} = (0, 0, \ldots, 0)$ and $v_0$ be the identity permutation in $S_{n}$.  Then for any other choice of $b \in \mathsf{BIAS}_n$ and $v \in S_n$, we have that $\mathcal{C}_{b, v}^n(p, \pi)$ has the same set of infinite rays as $\mathcal{C}_{b_0,v_0}^n(p, \pi)$.
\end{lemma}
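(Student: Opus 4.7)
The plan is to exploit the well-known fact that the recession cone of a rational polyhedron $\{x \in \mathbb{R}^N : Ax \leq d\}$ is $\{x : Ax \leq 0\}$, depending only on the coefficient matrix $A$ and not on the right-hand-side vector $d$. Since the infinite rays of a polyhedron are the extreme rays of its recession cone, it suffices to check that varying $b$ and $v$ only alters the constants in the inequalities produced in the proof of Theorem~\ref{t:polyhedron}, leaving the variable coefficients untouched.

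First I would catalog the defining inequalities of the unprojected polyhedron in $(t,c)$-space, written in the standard form $Ax \leq d$. Each upshift $(j<i)$ contributes $-t_{\pi(i)} - \cdots - t_{\pi(j)-1} + c_j + \cdots + c_{i-1} \leq \lfloor \bar{b}_{\pi(i),\pi(j)} \rfloor$, each downshift $(i<j)$ contributes $t_{\pi(j)} + \cdots + t_{\pi(i)-1} - c_i - \cdots - c_{j-1} \leq -1 - \lfloor \bar{b}_{\pi(j),\pi(i)} \rfloor$, the flattening $v$ contributes $-c_i \leq 0$ or $-c_i \leq -1$ according to whether $v^{-1}(\pi(i)) < v^{-1}(\pi(i+1))$ or not, and the constraints $-t_i \leq 0$ are always present. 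In every inequality the left-hand-side coefficients are $0, \pm 1$ integers determined entirely by $(p,\pi,n)$, while $b$ and $v$ appear only in the scalar right-hand-side constants $\lfloor \bar{b}_{i,j} \rfloor$ and the choice between $0$ and $-1$.

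By the Minkowski--Weyl decomposition, the unprojected polyhedron splits as a bounded polytope plus its recession cone, and the recession cone is the solution set of the homogenized system in which every right-hand side is replaced by $0$. By the previous paragraph this homogenized system is literally identical for every $(b,v)$, so the recession cones of the unprojected polyhedra coincide. To finish, I would invoke the standard fact that for rational polyhedra, coordinate projection commutes with taking the recession cone: if $P = Q + R$ with $Q$ bounded and $R$ its recession cone, then $\operatorname{proj}(P) = \operatorname{proj}(Q) + \operatorname{proj}(R)$ with $\operatorname{proj}(Q)$ still bounded, so the recession cone of $\operatorname{proj}(P)$ is $\operatorname{proj}(R)$. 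Since the projection onto the $t$-coordinates is independent of $(b,v)$ and all the unprojected recession cones agree, the projected polyhedra $\mathcal{C}_{b,v}^n(p,\pi)$ share the same set of infinite rays as $\mathcal{C}_{b_0,v_0}^n(p,\pi)$.

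The only step that needs attention is the first, and it is purely bookkeeping: one must verify that $b$ enters the inequalities only through the scalar floors $\lfloor \bar{b}_{i,j} \rfloor$ and that $v$ enters only through the constant lower bounds on the $c_i$. Neither modification reaches the coefficient matrix, so no serious obstacle arises.
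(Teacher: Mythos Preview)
Your proposal is correct and follows essentially the same approach as the paper: both arguments observe that the bias and flattening parameters affect only the right-hand-side constants of the defining inequalities, so the recession cone (and hence the set of infinite rays) is unchanged. Your version is more careful in that you explicitly handle the projection onto the $t$-coordinates via the identity $\operatorname{rec}(\operatorname{proj}(P)) = \operatorname{proj}(\operatorname{rec}(P))$, a step the paper leaves implicit.
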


As a result, we often drop $b$ and $v$ from our notation, and let $\mathcal{C}^n(p, \pi) = \mathcal{C}_{b_0,v_0}^n(p,\pi)$.

\begin{proof}
Write the polyhedron $\mathcal{C}_{b,v}^n(p, \pi)$ as the solution set to a
collection of linear inequalities.  We can use a matrix $A$ and multiplication
by $-1$ to write this in a standard form $A x \leq b$.  It is well-known (and
straightforward to verify) that changing $b$ cannot change any of the infinite
rays in the solution set.  Since changing the bias or flattening parameters only
alters the defining inequalities by a constant, and preserves all of the
coefficients of the $t_i$ and $c_i$, we obtain the result.
\end{proof}

\begin{corollary}\label{c:rational}
For any permutation pattern $p$ and any $n \geq 2$, the generating series
\[ F_{p, n}(x) = \sum_{\substack{w \in \widetilde{S}_n \\ w \text{ avoids } p}} x^{\ell(w)} \]
is rational.  Equivalently, the coefficient sequence is generated by a linear constant-coefficient recurrence.
\end{corollary}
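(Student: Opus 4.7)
The plan is to combine Theorem~\ref{t:polyhedron} with the decomposition $\widetilde{S}_n = \dunion_{b,v} \mathcal{C}_{b,v}^n$ and Brion's formula, reducing rationality of $F_{p,n}(x)$ to rationality of lattice-point encoding series for finitely many rational polyhedra.

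First, I would fix a pair $(b,v) \in \mathsf{BIAS}_n \times S_n$ and analyze the contribution of $\mathcal{C}_{b,v}^n$ to $F_{p,n}(x)$. By Proposition~\ref{p:coxeter_length}, the parameterization $\opdot{w}_i = i\, t_i + \opdot{b}_i$, and the length-additive decomposition $\ell(w) = \ell(u) + \ell(v)$, the Coxeter length of $w \in \mathcal{C}_{b,v}^n$ is
\[ \ell(w) = \ell(v) + \sum_{i=1}^{n-1} (n-i)\opdot{b}_i + \sum_{i=1}^{n-1} i(n-i)\, t_i, \]
an affine function of the $t_i$. Summing $x^{\ell(w)}$ over any subset of $\mathcal{C}_{b,v}^n$ therefore equals the monomial $x^{\ell(v)+\sum_i (n-i)\opdot{b}_i}$ times the specialization at $x_i \mapsto x^{i(n-i)}$ of the corresponding multivariate encoding series in the $t$-variables. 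Since $i(n-i) \geq 1$ for $1 \leq i \leq n-1$ and $n \geq 2$, this monomial specialization preserves rationality.

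Next, I would realize the $t$-coordinates in $\mathcal{C}_{b,v}^n$ corresponding to $p$-avoiders as the complement of a finite union of rational polyhedra. An element contains $p$ with strand assignment $\pi$ exactly when its $t$-vector lies in $\mathcal{C}_{b,v}^n(p,\pi)$, and there are only finitely many strand assignments $\pi \in \{1,\ldots,n\}^k$. Hence the indicator of the $p$-avoiders in $\mathcal{C}_{b,v}^n$ expands by inclusion-exclusion as
\[ \mathbf{1}_{\mathcal{C}_{b,v}^n} - \sum_{\varnothing \neq S} (-1)^{|S|+1}\, \mathbf{1}_{\bigcap_{\pi \in S} \mathcal{C}_{b,v}^n(p,\pi)}, \]
where $S$ ranges over nonempty sets of valid strand assignments. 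Each indicator appearing here is supported on the integer points of a rational polyhedron, since an intersection of rational polyhedra is again a rational polyhedron. The final step is to invoke Brion's formula, cited in Section~\ref{s:intro}: the lattice-point encoding series of any pointed rational polyhedron is a rational function of $x_1,\ldots,x_{n-1}$. Substituting $x_i \mapsto x^{i(n-i)}$, multiplying by the constant $x^{\ell(v)+\sum_i(n-i)\opdot{b}_i}$, combining the signed inclusion-exclusion terms, and then summing over the $(n-1)! \cdot n!$ pairs $(b,v)$ expresses $F_{p,n}(x)$ as a finite $\mathbb{Z}$-linear combination of rational functions of $x$, hence rational; the equivalent statement about linear constant-coefficient recurrences is then the standard fact for coefficient sequences of rational power series.

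The main obstacle I anticipate is verifying that Brion's formula applies uniformly to the unbounded polyhedra at hand, so that the inclusion-exclusion and the monomial specialization really do preserve rationality. The key point is pointedness: by Lemma~\ref{l:rays}, the recession cones of the $\mathcal{C}_{b,v}^n(p,\pi)$ are independent of $(b,v)$ and sit inside the nonnegative $t$-orthant, so the intersections appearing in inclusion-exclusion remain pointed rational polyhedra with recession cones inside that orthant. Consequently all encoding series share a common domain of convergence, and the monomial substitution $x_i \mapsto x^{i(n-i)}$ carries each rational function in the $x_i$ to a rational function in $x$ without introducing spurious cancellations. Once that step is confirmed, the remaining manipulations are purely formal.
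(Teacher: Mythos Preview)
Your proposal is correct and follows essentially the same approach as the paper: decompose $\widetilde{S}_n$ into the $\mathcal{C}_{b,v}^n$, use Theorem~\ref{t:polyhedron} to describe the $p$-containers for each strand assignment as integer points in rational polyhedra, apply inclusion--exclusion over the finitely many $\pi$, invoke Brion's formula for rationality of the encoding series, and then specialize $x_i \mapsto x^{i(n-i)}$. The only cosmetic difference is that you perform the subtraction ``all minus containers'' inside each $\mathcal{C}_{b,v}^n$ before summing, whereas the paper first sums the container series over all $(b,v)$ and then subtracts globally from Bott's formula; these are equivalent, and your discussion of pointedness (via the recession cones sitting in the nonnegative orthant) makes explicit a hypothesis for Brion that the paper leaves implicit.
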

\begin{proof}
Using Brion's formula (see \cite{beck-robins} or \cite{barvinok}) together with
inclusion-exclusion applied to Theorem~\ref{t:polyhedron}, we can obtain a
rational encoding series for the points of each $\bigcup_{\pi}
\mathcal{C}_{b,v}^n(p, \pi)$.  
The subsequent union of these sets over all $b$ and $v$ are disjoint, so we can
simply add the encoding series together.  Then, we specialize the encoding
series by setting $t_i$ to $\left(x^i\right)^{n-i}$ for each $1 \leq i < n$.
The first exponent dilates the lattice to recover the gap coordinates as in
Lemma~\ref{l:thec}, and the second exponent comes from
Proposition~\ref{p:coxeter_length}.  Finally, we subtract the result from Bott's
formula (which itself is rational) to enumerate the $p$-avoiding elements.
\end{proof}

Let us turn to some examples in $n = 3$ where we can draw pictures.

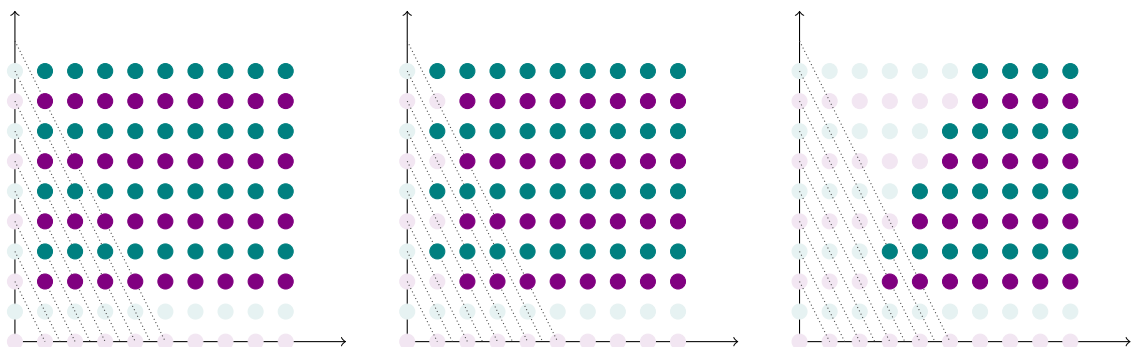
\begin{figure}[ht]
\begin{center}
\begin{tikzpicture}[scale=0.4]
    \draw [->] (0,0) -- (11,0);
    \draw [->] (0,0) -- (0,11);
    \foreach \x in {0,1,...,9}{
      \foreach \y in {0,1,...,4}{
        \node[draw,circle,inner sep=2pt,fill,violet!10] at (\x,2*\y) {};
        \node[draw,circle,inner sep=2pt,fill,teal!10] at (\x,2*\y+1) {};
      }
    }
    \draw [darkgray,densely dotted] (0,2) -- (1,0);
    \draw [darkgray,densely dotted] (0,3) -- (1.5,0);
    \draw [darkgray,densely dotted] (0,4) -- (2,0);
    \draw [darkgray,densely dotted] (0,5) -- (2.5,0);
    \draw [darkgray,densely dotted] (0,6) -- (3,0);
    \draw [darkgray,densely dotted] (0,7) -- (3.5,0);
    \draw [darkgray,densely dotted] (0,8) -- (4,0);
    \draw [darkgray,densely dotted] (0,9) -- (4.5,0);
    \draw [darkgray,densely dotted] (0,10) -- (5,0);
    \foreach \x in {1,2,...,9}{
      \foreach \y in {1,2,...,4}{
        \node[draw,circle,inner sep=2pt,fill,violet] at (\x,2*\y) {};
      }
    }
    \foreach \x in {1,2,...,9}{
      \foreach \y in {1,2,...,4}{
        \node[draw,circle,inner sep=2pt,fill,teal] at (\x,2*\y+1) {};
      }
    }
\end{tikzpicture}
\hspace{0.2in}
\begin{tikzpicture}[scale=0.4]
    \draw [->] (0,0) -- (11,0);
    \draw [->] (0,0) -- (0,11);
    \foreach \x in {0,1,...,9}{
      \foreach \y in {0,1,...,4}{
        \node[draw,circle,inner sep=2pt,fill,violet!10] at (\x,2*\y) {};
        \node[draw,circle,inner sep=2pt,fill,teal!10] at (\x,2*\y+1) {};
      }
    }
    \draw [darkgray,densely dotted] (0,2) -- (1,0);
    \draw [darkgray,densely dotted] (0,3) -- (1.5,0);
    \draw [darkgray,densely dotted] (0,4) -- (2,0);
    \draw [darkgray,densely dotted] (0,5) -- (2.5,0);
    \draw [darkgray,densely dotted] (0,6) -- (3,0);
    \draw [darkgray,densely dotted] (0,7) -- (3.5,0);
    \draw [darkgray,densely dotted] (0,8) -- (4,0);
    \draw [darkgray,densely dotted] (0,9) -- (4.5,0);
    \draw [darkgray,densely dotted] (0,10) -- (5,0);
    \foreach \x in {2,3,...,9}{
      \foreach \y in {1,2,...,4}{
        \node[draw,circle,inner sep=2pt,fill,violet] at (\x,2*\y) {};
      }
    }
    \foreach \x in {1,2,...,9}{
      \foreach \y in {1,2,...,4}{
        \node[draw,circle,inner sep=2pt,fill,teal] at (\x,2*\y+1) {};
      }
    }
\end{tikzpicture}
\hspace{0.2in}
\begin{tikzpicture}[scale=0.4]
    \draw [->] (0,0) -- (11,0);
    \draw [->] (0,0) -- (0,11);
    \foreach \x in {0,1,...,9}{
      \foreach \y in {0,1,...,4}{
        \node[draw,circle,inner sep=2pt,fill,violet!10] at (\x,2*\y) {};
        \node[draw,circle,inner sep=2pt,fill,teal!10] at (\x,2*\y+1) {};
      }
    }
    \draw [darkgray,densely dotted] (0,2) -- (1,0);
    \draw [darkgray,densely dotted] (0,3) -- (1.5,0);
    \draw [darkgray,densely dotted] (0,4) -- (2,0);
    \draw [darkgray,densely dotted] (0,5) -- (2.5,0);
    \draw [darkgray,densely dotted] (0,6) -- (3,0);
    \draw [darkgray,densely dotted] (0,7) -- (3.5,0);
    \draw [darkgray,densely dotted] (0,8) -- (4,0);
    \draw [darkgray,densely dotted] (0,9) -- (4.5,0);
    \draw [darkgray,densely dotted] (0,10) -- (5,0);

    \foreach \x/\y in {3/2, 4/4, 5/6, 6/8, 
                       4/2, 5/4, 6/6, 7/8, 
                       5/2, 6/4, 7/6, 8/8, 
                       6/2, 7/4, 8/6, 9/8,
                       7/2, 8/4, 9/6,
                       8/2, 9/4,
                       9/2 }
    {
        \node[draw,circle,inner sep=2pt,fill,violet] at (\x,\y) {};
    }

    \foreach \x/\y in {3/3, 4/5, 5/7, 6/9, 
                       4/3, 5/5, 6/7, 7/9,
                       5/3, 6/5, 7/7, 8/9,
                       6/3, 7/5, 8/7, 9/9,
                       7/3, 8/5, 9/7,
                       8/3, 9/5,
                       9/3 }
    {
        \node[draw,circle,inner sep=2pt,fill,teal] at (\x,\y) {};
    }
\end{tikzpicture}
\end{center}
\caption{(a) $(p,\pi) = ([321],[3,2,1])$; (b) $(p, \pi) = ([2431], [3,3,2,1])$;\ \hspace{0.7in}\ (c) $(p, \pi) = ([24351], [2,3,2,2,1])$.}\label{f:n3ex}
\end{figure}

\begin{example}
    In Figure~\ref{f:n3ex} we have displayed some $\mathcal{C}_{b,v}^n(p, \pi)$.  In each of the examples, we have $n = 3$, $v = [123]$, both biases are displayed superimposed, and $(p, \pi)$ vary.
    We have also drawn some of the hyperplanes of constant Coxeter length from which the contributions to the rational generating series can be computed.

Observe that in Example (a) the counting sequence for the number of $p$-avoiding
elements eventually stabilizes.  In Example (b), we have a periodic sequence
with period $2$.  Example (c) produces an unbounded counting sequence (although
other strand assignments provide a ray in the $y$ direction that is missing for
this assignment; the full counting sequence for this $p$ turns out to be periodic).
\end{example}

\begin{warning}\label{w:e}
These polyhedra can be empty.  For example, $p = [7,1,0,4,5,2,8,10,6,9,3]$ has only one strand assignment using $3$ strands, and the corresponding $\mathcal{C}^n(p, \pi)$ polyhedron is empty.
\end{warning}

There are some natural questions about these polyhedra to which we do not currently know the answer.

\begin{question}
If we fix the bias and flattening parameters, is the union
    \[ \bigcup_{\pi} \mathcal{C}_{b,v}^n(p, \pi) \]
over all strand assignments necessarily convex? 
(If so, this would dramatically simplify the computation of the encoding series.)
\end{question}

\begin{question}
Given a pattern $p \in S_k$ with $j$ strands, we certainly need $n \geq j$ in
order to successfully embed $p$ into $\widetilde{S}_n$.  By Warning~\ref{w:e},
this inequality is sometimes strict.  Is there a simple way to describe the
minimal size of an affine permutation that contains a given pattern $p$?
\end{question}

\bigskip
\section{Periodic patterns}\label{s:periodic}

Let $a_i$ denote the coefficients of the rational generating series
$F_{p,n}(x)$ from Corollary~\ref{c:rational}.  That is, $a_i$ counts the number
of affine permutations of fixed size $n$ and length $i$ that avoid the fixed
pattern $p$.  Since the $a_i$ obey a recurrence, it follows that there are three
possible types of behavior.

\begin{definition}\label{d:classify}
We say that a permutation pattern $p$ is {\bf finitely enumerated} if the $a_i$
are eventually zero.  We say $p$ is {\bf periodic} if the $a_i$ eventually
satisfy $a_i = a_{i-N}$ for some fixed $N$.  Otherwise, we say that $p$ is {\bf
unbounded}.
\end{definition}

(To verify that this definition is etymologically sound, use the pigeonhole principle to
show that whenever $a_i$ is a bounded sequence that satisfies a recurrence using
a fixed number of prior terms, then $a_i$ is actually periodic.)

Crites' characterized the finitely enumerated patterns in \cite{crites}, and
Hanusa--Jones gave the first example of a periodic pattern, $p = [321]$, in
\cite{HJ}.  Our goal in this section is to characterize all of the periodic patterns.  

Note that the classification in Definition~\ref{d:classify} depends only on the
denominator of the generating series and so the contributions from each bias
and flattening must each fall into the same case by Lemma~\ref{l:rays}.  For
this reason, it suffices to work with the enumerating sequence for
$\bigcup_{\pi} \mathcal{C}^n(p, \pi)$ in this section.

\begin{definition}
Given $p \in S_k$, let $m$ be the length of the longest decreasing subsequence of $p$.  In this situation, we say that $p$ {\bf has $m$ strands}.
\end{definition}

We rephrase Crites' Theorem from \cite{crites} as follows.

\begin{theorem}{\bf (Crites)}  
    In each $n$, the permutation pattern $p$ is finitely enumerated if and only if $p$ has fewer than $3$ strands.
\end{theorem}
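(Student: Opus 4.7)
The plan is to show that the theorem is a direct rephrasing of Crites' Theorem as quoted in the introduction, so the proof reduces to verifying two equivalences that connect the two formulations.

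First, on the pattern side, by the definition of strands, $p$ has $m$ strands exactly when the longest decreasing subsequence of $p$ has length $m$. So $p$ has fewer than $3$ strands if and only if $p$ has no $3$-term decreasing subsequence, which is precisely the classical condition that $p$ avoids the pattern $[321]$. This translation is immediate from the definitions and requires no calculation.

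Second, on the enumeration side, ``$p$ is finitely enumerated'' means, by Definition~\ref{d:classify}, that the sequence $a_i$ of $p$-avoiding elements of length $i$ is eventually zero, whereas Crites' original statement refers instead to the total set of $p$-avoiding affine permutations being finite. These are equivalent because each fixed Coxeter length level of $\widetilde{S}_n$ is itself a finite set: this is immediate from Corollary~\ref{c:bottdenom}, or directly from the polyhedral encoding, since the hyperplane $\sum_{i=1}^{n-1} (n-i) z_i = \ell$ meets the nonnegative orthant $\mathbb{Z}_{\geq 0}^{n-1}$ in a bounded set, and multiplying by a flattening gives a finite factor of $n!$. Hence the total count $\sum_i a_i$ is finite exactly when $a_i$ is eventually zero. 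Combined with the first equivalence, Crites' Theorem then yields the result.

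The only obstacle in this plan is notational: one must verify that the strand formulation really matches $[321]$-avoidance and that ``finitely enumerated'' really matches ``finitely many,'' but both are immediate. The deeper content, namely the forward and backward implications of Crites' theorem itself, is taken as a black box. If one wanted a self-contained polyhedral proof, then the direction $(\Rightarrow)$ could be handled by exhibiting an explicit infinite ray in $\mathcal{C}^n_{b_0,v_0}$ along which every element avoids $[321]$ (for example, the ray corresponding to the $t_1$-axis in each bias), and hence avoids any $p$ that contains $[321]$; the direction $(\Leftarrow)$ would require showing that when $p$ has at most $2$ strands, the union $\bigcup_\pi \mathcal{C}^n(p, \pi)$ over valid strand assignments covers all but finitely many lattice points of $\bigcup_{b \in \mathsf{BIAS}_n} \mathcal{C}^n_b$. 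That covering step, which amounts to exhausting every ray of the orthant with some strand-assignment polyhedron, would be the main technical difficulty in such an alternative approach.
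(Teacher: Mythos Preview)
Your proposal is correct and matches the paper's approach: the paper does not supply its own proof but simply cites \cite{crites} and presents the statement as a rephrasing, so the only content to verify is exactly the two dictionary translations you give (``fewer than $3$ strands'' $\Leftrightarrow$ ``$[321]$-avoiding'' and ``finitely enumerated'' $\Leftrightarrow$ ``finitely many''). Your additional sketch of a self-contained polyhedral argument is extra and not part of what the paper does.
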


The following result then shows that periodic patterns can only exist on three strands.

\begin{proposition}
    In each $n$, if $p$ has four or more strands then $p$ is unbounded.
\end{proposition}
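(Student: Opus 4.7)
The plan is to reduce to the single pattern $[4321]$ and exploit the polyhedral description of Theorem~\ref{t:polyhedron} to exhibit a family of avoiding lattice points whose count per Coxeter length grows without bound.  Since $p$ has at least four strands, its longest decreasing subsequence yields a classical embedding $[4321] \hookrightarrow p$, so any $w \in \widetilde{S}_n$ avoiding $[4321]$ also avoids $p$ and $a_i^p \geq a_i^{[4321]}$ termwise.  It therefore suffices to show that $[4321]$ is unbounded in $\widetilde{S}_n$ for $n \geq 3$.  The case $n = 3$ is easy:  a pigeonhole on residues modulo~$3$ shows that $[4321]$ cannot embed in $\widetilde{S}_3$ at all, so $F_{[4321], 3}$ equals Bott's formula and has linearly growing coefficients.

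For $n \geq 4$ I would invoke Theorem~\ref{t:polyhedron} for $p = [4321]$.  Because $p$ is strictly decreasing, every valid strand assignment must be strictly decreasing as well, so $\pi = (\pi_1, \pi_2, \pi_3, \pi_4)$ is parameterized by a $4$-subset of $\{1, \ldots, n\}$.  All three consecutive pairs of $p$ are upshifts, and in the base cone $\mathcal{C}^n_{b_0, v_0}$ the projected containment polyhedron is cut out by
\[ \sum_{i=\pi_2}^{\pi_1-1} t_i \;\geq\; 1, \quad \sum_{i=\pi_3}^{\pi_2-1} t_i \;\geq\; 1, \quad \sum_{i=\pi_4}^{\pi_3-1} t_i \;\geq\; 1. \]
Thus a lattice point $t \in \mathbb{Z}_{\geq 0}^{n-1}$ contains $[4321]$ for some $\pi$ if and only if there is a $4$-subset $a < b < c < d$ in $\{1, \ldots, n\}$ such that each of the three intervals $[a, b-1]$, $[b, c-1]$, $[c, d-1]$ meets the support of $t$.

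The key combinatorial claim is that this condition is equivalent to $|\mathrm{supp}(t)| \geq 3$.  If $t$ has three positive coordinates at $i_1 < i_2 < i_3$, the $4$-subset $\{i_1, i_2, i_3, i_3 + 1\}$ witnesses containment (using $i_3 \leq n-1$ to place the fourth element); conversely, three pairwise disjoint intervals cannot each meet a set of size at most~$2$.  The $[4321]$-avoiders in $\mathcal{C}^n_{b_0, v_0}$ are therefore exactly the lattice points with support size at most~$2$.  For each pair $1 \leq i < j \leq n - 1$, the two-dimensional coordinate face supported on $\{i, j\}$ lies entirely in this avoiding set, and the Coxeter length restricts there to the linear form $i(n-i) t_i + j(n-j) t_j$; the number of lattice points of a fixed length $\ell$ on this face is $\Theta(\ell)$ along a suitable arithmetic progression, so $a_\ell^{[4321]} = \Omega(\ell)$ and $p$ is unbounded.

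The main subtlety I expect is in the reverse direction of the support characterization, where one must verify that the fourth element of the witnessing $4$-subset can always be chosen inside $\{1, \ldots, n\}$.  A secondary concern is that the central calculation is stated for the base cone only; by Lemma~\ref{l:rays} the infinite rays of the containment polyhedra are independent of $(b, v)$, so the two-dimensional coordinate faces survive in every bias and flattening, and the linear lower bound on $a_\ell$ persists for the full enumerating sequence.
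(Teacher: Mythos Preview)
Your argument is correct, but it takes a substantially longer route than the paper's own proof.  The paper disposes of the statement in three lines: every element of $\widetilde{S}_3^\circ$ has longest decreasing subsequence at most~$3$ and hence avoids any $p$ with four or more strands; Bott's formula for $\widetilde{S}_3^\circ$ already has unbounded coefficients; and for $n>3$ one embeds $\widetilde{S}_3^\circ \hookrightarrow \widetilde{S}_n^\circ$ by left-padding the gap vector with zeros, which is length-preserving by Proposition~\ref{p:coxeter_length} and cannot increase the longest decreasing subsequence.  No reduction to $[4321]$, no Theorem~\ref{t:polyhedron}, no support characterization.

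Your route---reducing to $[4321]$ and then, for $n\geq 4$, writing down the upshift inequalities and proving that containment is equivalent to $|\mathrm{supp}(t)|\geq 3$---is sound, and the support characterization is a nice explicit statement that the paper does not isolate.  But it is overkill here: the polyhedral machinery of Theorem~\ref{t:polyhedron} is not needed once you notice that the $n=3$ picture can simply be transported into $\widetilde{S}_n^\circ$ by the padding embedding.  Indeed, your own $n=3$ case already contains the whole idea; you could have finished immediately by observing that the padding map lands in the two-dimensional face $\{t_1=\cdots=t_{n-3}=0\}$ you eventually exhibit anyway.  Your closing concern about transferring from the base cone to all $(b,v)$ is also unnecessary for a lower bound: the avoiders you produce in $\mathcal{C}^n_{b_0,v_0}$ already contribute to $a_\ell$, so unboundedness in a single cone suffices.
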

\begin{proof}
Consider Bott's formula for $\widetilde{S}_3^\circ$.  The sequence of
coefficients is unbounded, and the affine permutations in
$\widetilde{S}_3^\circ$ all avoid $p$ (since the length of the longest
decreasing subsequence in any of them is clearly $3$ or less).  When $n > 3$,
we can embed $w \in \widetilde{S}_3^\circ$ into $\widetilde{S}_n^\circ$ by
padding $\opdot{w}$ with zeros on the left.  This embedding is injective, the
length of the longest decreasing subsequence in the image will be the same or
smaller, and by Proposition~\ref{p:coxeter_length} we do not change the Coxeter
length.  Hence, we obtain the result.
\end{proof}

\begin{lemma}\label{l:periodic_classification}
In each $n$, we have that $p$ is periodic if and only if there exists a constant $B$ such that $\bigcup_{\pi} \mathcal{C}^n(p, \pi)$ contains every point $(t_1, \ldots, t_{n-1})$ that has two or more $t_i$ coordinates larger than $B$.
\end{lemma}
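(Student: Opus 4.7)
My plan is to prove each direction separately, relating periodicity of the coefficient sequence to a geometric ``thinness'' property of the avoider set $A := \mathbb{Z}_{\geq 0}^{n-1} \setminus \bigcup_\pi \mathcal{C}^n(p,\pi)$.  Since $\bigcup_\pi \mathcal{C}^n(p,\pi)$ is a finite union of rational polyhedra (Theorem~\ref{t:polyhedron}), $A$ is also a finite union of rational polyhedra, and the length functional is $\ell(t) = \sum_{k=1}^{n-1} k(n-k)\, t_k$, with every positive weight divisible by $(1-x)$ after passing to generating functions.

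For the $(\Leftarrow)$ direction, suppose such a $B$ exists.  Then $A$ lies inside the axis-aligned ``skeleton'' consisting of the bounded box $\{t : t_k \leq B\ \text{for all } k\}$ together with the $n-1$ coordinate slabs $S_i := \{t : t_k \leq B\ \text{for all } k \neq i\}$.  The box contributes a polynomial.  Each slab $S_i$ decomposes into finitely many 1-dimensional lattice half-lines, indexed by the finitely many choices of $(t_k)_{k \neq i} \in \{0, \ldots, B\}^{n-2}$.  On each such half-line, $\bigcup_\pi \mathcal{C}^n(p,\pi)$ restricts to a finite union of intervals in $t_i$ (the intersection of a finite union of rational polyhedra with a line), so $A$ does too, and its length generating function along the half-line is rational with denominator dividing $1-x^{i(n-i)}$.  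Summing all contributions yields a rational generating function with denominator dividing $\operatorname{lcm}_i (1-x^{i(n-i)})$, hence an eventually periodic coefficient sequence.

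For the $(\Rightarrow)$ direction, argue by contrapositive.  Assume no such $B$ exists, so that for every $B$ there is an avoider with at least two coordinates exceeding $B$.  By pigeonhole over the finitely many pairs $(i,j)$ of indices and over the finitely many polyhedral pieces of $A$, there is a specific pair $(i,j)$ and a single piece $Q \subseteq A$ containing a sequence $t^{(m)}$ with $t^{(m)}_i, t^{(m)}_j \to \infty$.  The key step is to argue that $\operatorname{rec}(Q)$ is 2-dimensional, say spanned by linearly independent directions $v_1, v_2 \in \mathbb{Z}_{\geq 0}^{n-1}$.  Granted this, for any fixed $t_0 \in Q$ the shifted sub-cone $\{t_0 + s_1 v_1 + s_2 v_2 : s_1, s_2 \in \mathbb{Z}_{\geq 0}\}$ lies in $Q \subseteq A$; its elements have length $\ell(t_0) + s_1 d_1 + s_2 d_2$ with $d_k = \ell(v_k) > 0$, and the number of such elements at length $L$ grows at least linearly in $L$.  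Thus $a_L$ grows at least linearly, contradicting periodicity.

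The main obstacle is the 2-dimensionality claim: \emph{a priori} $\operatorname{rec}(Q)$ could consist of a single 1-dimensional ray in a direction $v$ with $v_i, v_j > 0$, giving only a periodic contribution.  Ruling this out requires a structural analysis of the defining inequalities of the $\mathcal{C}^n(p,\pi)$'s constructed in the proof of Theorem~\ref{t:polyhedron}.  Specifically, after projecting out the window coordinates $c$, the inequalities are controlled sums $t_a + t_{a+1} + \cdots + t_{b-1}$ bounded above or below by constants; negating such an inequality produces a ``slab'' whose recession cone zeros out exactly those coordinates, a coordinate subspace.  I expect this to imply that every piece $Q$ of $A$ has $\operatorname{rec}(Q)$ spanned by a set of standard basis vectors $\{e_k : k \in S_Q\}$, so that the existence of any recession direction with positive $i$-th and $j$-th components forces $e_i$ and $e_j$ separately into $\operatorname{rec}(Q)$, yielding the required 2-dimensional sub-cone.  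Carefully verifying this coordinate-subspace structure (including checking that Fourier--Motzkin elimination of the $c$-variables preserves it) will be the technical heart of the argument.
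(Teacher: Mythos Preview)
Your $(\Leftarrow)$ direction is correct and is essentially the paper's argument made explicit: the paper just says that once all but one gap coordinate is bounded, the length equation ``determines'' the remaining one, so the count at each length is bounded and hence (by rationality) periodic.  Your slab decomposition with denominators $1-x^{i(n-i)}$ is a cleaner way to package the same idea.

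For the $(\Rightarrow)$ direction you have put your finger on the genuine difficulty, and in fact the paper's own proof is informal at exactly this point.  The paper asserts that if avoiders exist with two arbitrarily large coordinates then ``one of these coordinates will be undetermined'' on the length hyperplane, but this is not a general fact about finite unions of polyhedra: an avoider set such as $\{\,t : |t_i - t_j| \le C\,\}$ contains points with both $t_i,t_j$ arbitrarily large yet meets each length hyperplane in a bounded set.  So the lemma really does depend on the particular structure of the polyhedra $\mathcal{C}^n(p,\pi)$, and the paper leaves this implicit.

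Your proposed resolution --- that each piece $Q$ of $A$ has a recession cone spanned by standard basis vectors --- is the right target, but the specific mechanism you outline will not work as stated.  A single Fourier--Motzkin step can already produce mixed-sign constraints in the $t$-variables: for instance, combining an upshift inequality $t_1 \ge c_1$ with a downshift inequality $t_2 \le c_1 - 1$ yields $t_1 \ge t_2 + 1$, whose negation $t_1 \le t_2$ has recession cone $\{0 \le t_1 \le t_2\}$, which is \emph{not} a coordinate subspace.  So ``each piece of $A$ has coordinate-subspace recession cone'' is false in general for a single $\pi$.  Whatever is true must be a statement about the union over \emph{all} $\pi$ --- roughly, that whenever one $\pi$ imposes a diagonal constraint, some other $\pi'$ fills in the complementary region so that the combined container set leaves only a genuinely thin avoider set.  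Making this precise is where the work lies, and neither your proposal nor the paper's proof actually carries it out.
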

\begin{proof}
Since the enumerating sequences are generated by a recurrence, we have that $p$
is periodic if and only if there exists an upper bound for the values of the
sequence.  Also, the condition in the statement for the $t$ coordinates is true
if and only if it is true for the corresponding $\opdot{w}$ gap vector coordinates.

To prove the result, first suppose that at most one $\opdot{w}_i$ coordinate
(from the space $\mathbb{Z}^{n-1}_{\geq 0}$ of gap vectors) can become
arbitrarily large when we avoid $p$.  Then when we intersect with the
hyperplane 
\[ \opdot{w}_{n-1} + 2\opdot{w}_{n-2} + 3\opdot{w}_{n-3} + \cdots +
(n-1)\opdot{w}_{1} = i \]
of points with fixed Coxeter length $i$, for large $i$, the unbounded
coordinate actually becomes determined; it must take up the ``slack'' in this
equation for all of the bounded coordinates.  As a result, we only have a
bounded number of $p$-avoiding gap vectors, so the sequence is periodic.

Conversely, if there can be two unbounded gap vector coordinates when we avoid
$p$, then one of these coordinates will be undetermined when we intersect with
the hyperplane of fixed Coxeter length $i$, for sufficiently large $i$.  Hence,
the enumerating sequence is unbounded and so $p$ is not periodic.
\end{proof}

\begin{corollary}\label{c:3rays}
In $n = 3$, we have that $p$ is periodic if and only if $\bigcup_{\pi} \mathcal{C}^3(p, \pi)$ contains infinite rays in the $t_1$ and $t_2$ directions.
\end{corollary}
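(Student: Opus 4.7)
The plan is to specialize Lemma \ref{l:periodic_classification} to the case $n = 3$, where the ambient gap-coordinate space is only two-dimensional. Since there are exactly two coordinates $t_1, t_2$, the clause ``two or more $t_i$ coordinates larger than $B$'' in that lemma becomes simply ``both $t_1 > B$ and $t_2 > B$.'' Thus the lemma asserts that $p$ is periodic if and only if there exists a constant $B$ such that $\bigcup_{\pi} \mathcal{C}^3(p, \pi)$ contains the entire northeast quadrant $\{(t_1, t_2) : t_1 > B,\, t_2 > B\}$. It then suffices to show that this quadrant condition is equivalent to the union containing infinite rays in both the $+t_1$ and $+t_2$ directions.

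The forward direction is essentially immediate: if the union contains the northeast quadrant past $B$, fix $t_2 = B+1$ to exhibit the infinite horizontal set $\{(t_1, B+1) : t_1 > B\} \subseteq \bigcup_\pi \mathcal{C}^3(p, \pi)$, and similarly fix $t_1 = B+1$ for the vertical case. Since the union is over finitely many strand assignments and each $\mathcal{C}^3(p, \pi)$ is convex, pigeonhole forces some individual polyhedron to contain infinitely many points of each such ray, and convexity then upgrades this to containment of a genuine infinite tail in the corresponding coordinate direction.

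The reverse direction is the substantive part. Assume some $\mathcal{C}^3(p, \pi_1)$ has the unit vector $(1, 0)$ in its recession cone and some $\mathcal{C}^3(p, \pi_2)$ has $(0, 1)$ in its recession cone. My approach is to lift back to the $(t, c)$-description from the proof of Theorem~\ref{t:polyhedron}: a recession direction at the $t$-level comes from a recession direction of the full polyhedron that is compatible with all upshift and downshift inequalities simultaneously. I would then argue that for any target lattice point $(t_1, t_2)$ with both coordinates sufficiently large, we can combine the flexibility in the $c$-coordinates afforded by these recession directions with a choice of strand assignment (possibly $\pi_1$, $\pi_2$, or some ``interpolation'' among strand assignments) to exhibit a valid pattern embedding.

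The main obstacle is the last step: showing that these two recession rays, possibly coming from different strand assignments, can always be patched together to cover every point of the northeast quadrant past some threshold. I expect this to rest on a monotonicity observation, namely that increasing a $t_i$ corresponds to inserting a full abacus level between defining beads, an operation that should preserve any pre-existing instance of $p$ up to shifting its window assignment. Making this precise at the level of the inequalities defining the $\mathcal{C}^3(p, \pi)$ should deliver the required covering property and complete the proof.
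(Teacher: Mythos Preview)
The paper gives no proof of this corollary, treating it as an immediate specialization of Lemma~\ref{l:periodic_classification} to $n = 3$. You correctly identify that the passage from ``contains a shifted northeast quadrant'' (the $n=3$ reading of Lemma~\ref{l:periodic_classification}) to ``contains rays in both coordinate directions'' has a nontrivial reverse implication, and that some monotonicity of pattern containment is the natural tool. In that sense you are being more careful than the paper itself.

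That said, your monotonicity argument remains a sketch, and the obstacle you flag is genuine. Increasing a $t_i$ while keeping the \emph{same} strand assignment $\pi$ and merely adjusting the window assignment $c$ does not always preserve feasibility: the downshift inequalities in the proof of Theorem~\ref{t:polyhedron} tighten when the corresponding $t$-sum grows, and there need not be room in the $c$-coordinates to compensate without violating some upshift constraint. Indeed Figure~\ref{f:n3ex}(c) exhibits a $\mathcal{C}^3(p,\pi)$ whose recession cone omits the $t_2$-direction entirely. So the phrase ``up to shifting its window assignment'' does not close the gap; one must allow the strand assignment itself to change as $t$ grows, and your proposal does not say how to produce the new $\pi$. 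Your suggested ``interpolation among strand assignments'' is exactly the missing ingredient, but it is asserted rather than constructed.

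It is worth noting, finally, that the paper's later uses of the corollary (in the proof of Theorem~\ref{t:ptc}) only invoke it in situations where the equivalence is cheap: either Lemma~\ref{l:fir2} places both rays in a \emph{single} convex $\mathcal{C}^3(p,\pi)$, so convexity immediately yields a shifted quadrant and Lemma~\ref{l:periodic_classification} applies; or every feasible $\pi$ is shown to miss the same coordinate ray, so the union misses it too and the trivial direction suffices. Thus the full force of the reverse implication, as stated, is not strictly required for the paper's main results, even though the corollary is phrased as an unconditional equivalence.
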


We now turn to classify the periodic patterns in $n = 3$.  Eventually we show
that these are the only periodic patterns for any $n$.

\begin{definition}
We say that $(p, \pi)$ is {\bf feasible} if $\bigcup_{\pi} \mathcal{C}^3(p, \pi)$ is nonempty.
\end{definition}

While it remains an open problem to provide a (simple) combinatorial characterization for feasibility, there are standard techniques from convex geometry (such as Fourier--Motzkin elimination or Lenstra's algorithm for integer programming \cite{schrijver}) that may be used to address this question.

\begin{definition}
Let $p \in S_k$ with 3 strands, and let $\pi$ be a strand assignment for $p$.
Consider the diagram of $(p, \pi)$ in which we represent $p_i$ by a point
$(i, p_i)$ in the plane and label the point by its strand assignment
$\pi(i)$.

We say that two elements of the second strand $p_i$ and $p_j$ are {\bf linked
below (above)} if there exists an element of the first (third, respectively)
strand lying below and right (above and left, respectively) of both of them.

We say that two elements of the second strand are {\bf chained below (above)} if
there is a consecutive sequence of elements from the second strand between them that
are linked below (above, respectively).

A {\bf corner} of $(p,\pi)$ consists of a triple $(i,j,k)$ such that $p_i$ and $p_j$ are distinct elements of the second strand, and $p_k$ is an element of the first or third strand that lies inside the square having $p_i$ and $p_j$ as diagonal vertices.

The corner is said to be {\bf tight} if the elements $p_i$ and $p_j$ are chained
below, or chained above.
\end{definition}

Some tight corners that are chained below are shown schematically in Figures~\ref{f:min_schem} and \ref{f:generalform}.  Points drawn in the same row or column can be resolved to a permutation by either perturbation of the points.  Thus, each picture encodes several classical permutation patterns.

\begin{figure}[ht]
\begin{center}
\begin{tikzpicture}[scale=0.7]
    \draw (0,2) -- (2,2);
    \draw[densely dotted] (2,2) -- (2,6);
    \draw (2,2) -- (2,0);
    \draw[dashed] (2,2) -- (8,2);
    \node at (2,2) [circle,draw,fill=white] {$2$};
    \draw (0,5) -- (3,5);
    \draw (3,5) -- (3,8);
    \draw (3,5) -- (6,5);
    \draw (3,5) -- (3,2);
    \node at (3,5) [circle,draw,fill=white] {$3$};
    \draw[densely dotted] (2,6) -- (6,6);
    \draw (6,6) -- (6,8);
    \draw[dashed] (6,6) -- (6,0);
    \draw (6,6) -- (8,6);
    \node at (6,6) [circle,draw,fill=white] {$2$};
    \node at (4,7) [circle,draw,fill=lightgray!40] {$3$};
    \node at (1,4) [circle,draw,fill=lightgray!40] {$3$};
    \node at (4,4) [circle,draw,fill=lightgray!40] {$2$};
    \draw[dashed] (4,4) -- (8,4);
    \draw[dashed] (4,4) -- (4,0);
    \node at (7,1) [circle,draw,fill=lightgray!90] {$1$};
\end{tikzpicture}
\hspace{0.2in}
\begin{tikzpicture}[scale=0.7]
    \draw (0,2) -- (2,2);
    \draw[densely dotted] (2,2) -- (2,6);
    \draw (2,2) -- (2,0);
    \draw[dashed] (2,2) -- (8,2);
    \node at (2,2) [circle,draw,fill=white] {$2$};
    \draw (0,5) -- (3,5);
    \draw (3,5) -- (3,8);
    \draw (3,5) -- (6,5);
    \draw (3,5) -- (3,2);
    \node at (3,5) [circle,draw,fill=white] {$3$};
    \draw[densely dotted] (2,6) -- (6,6);
    \draw (6,6) -- (6,8);
    \draw[dashed] (6,6) -- (6,0);
    \draw (6,6) -- (8,6);
    \node at (6,6) [circle,draw,fill=white] {$2$};
    \node at (4,7) [circle,draw,fill=lightgray!40] {$3$};
    \node at (1,4) [circle,draw,fill=lightgray!40] {$3$};
    \node at (4,4) [circle,draw,fill=white] {$2$};
    \draw[dashed] (4,4) -- (8,4);
    \draw[dashed] (4,4) -- (4,0);
    \node at (5,1) [circle,draw,fill=lightgray!90] {$1$};
    \node at (7,3) [circle,draw,fill=lightgray!90] {$1$};
\end{tikzpicture}
\end{center}
\caption{Minimal tight corners}\label{f:min_schem}
\end{figure}
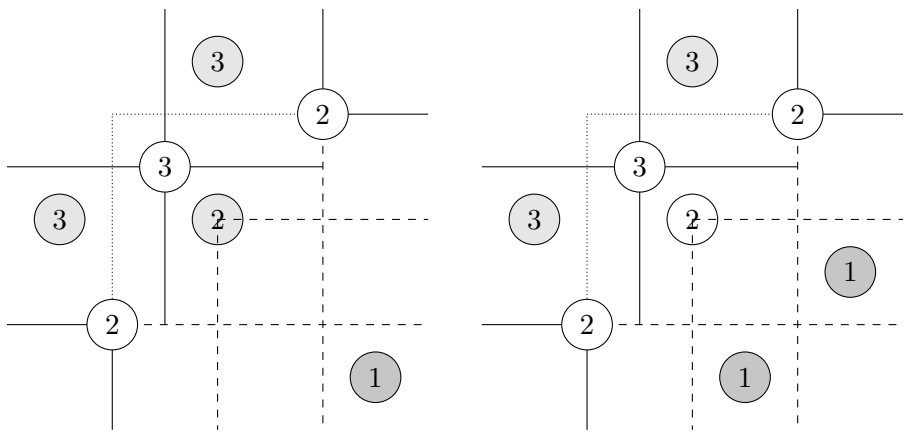

\begin{figure}[ht]
\begin{center}
\begin{tikzpicture}[scale=1.6]
    \draw (1.5,2) -- (2,2);
    \draw[densely dotted] (2,2) -- (2,6);
    \draw (2,2) -- (2,1.0);
    \draw[dashed] (2,2) -- (7,2);
    \node at (2,2) [circle,draw,fill=white] {$2$};
    \draw (1.5,5) -- (3,5);
    \draw (3,5) -- (3,7);
    \draw (3,5) -- (6,5);
    \draw (3,5) -- (3,2);
    \node at (3,5) [circle,draw,fill=white] {$3$};
    \draw[densely dotted] (2,6) -- (6,6);
    \draw (6,6) -- (6,7);
    \draw[dashed] (6,6) -- (6,1.0);
    \draw (6,6) -- (7,6);
    \node at (6,6) [circle,draw,fill=white] {$2$};
    \node at (4,6.5) [circle,draw,fill=lightgray!40] {$3$};
    \node at (1.5,4) [circle,draw,fill=lightgray!40] {$3$};
    \node at (5.5,4.5) [circle,draw,fill=white] {$2$};
    \node at (4.1,3.1) [circle,draw,fill=white] {$2$};
    \node at (3.5,2.5) [circle,draw,fill=white] {$2$};
    \node at (4.9,3.9) {$\vdots$};
    \draw[dashed] (5.5,4.5) -- (7,4.5);
    \draw[dashed] (5.5,4.5) -- (5.5,1.0);
    \draw[dashed] (4.1,3.1) -- (7,3.1);
    \draw[dashed] (4.1,3.1) -- (4.1,1.0);
    \draw[dashed] (3.5,2.5) -- (7,2.5);
    \draw[dashed] (3.5,2.5) -- (3.5,1.0);
    \node at (6.5,4) [circle,draw,fill=lightgray!90] {$1$};
    \node at (5.75,2.8) [circle,draw,fill=lightgray!90] {$1$};
    \node at (4.75,2.25) [circle,draw,fill=lightgray!90] {$1$};
    \node at (3.8,1.5) [circle,draw,fill=lightgray!90] {$1$};
\end{tikzpicture}
\end{center}
\caption{General form:  tight corners}\label{f:generalform}
\end{figure}
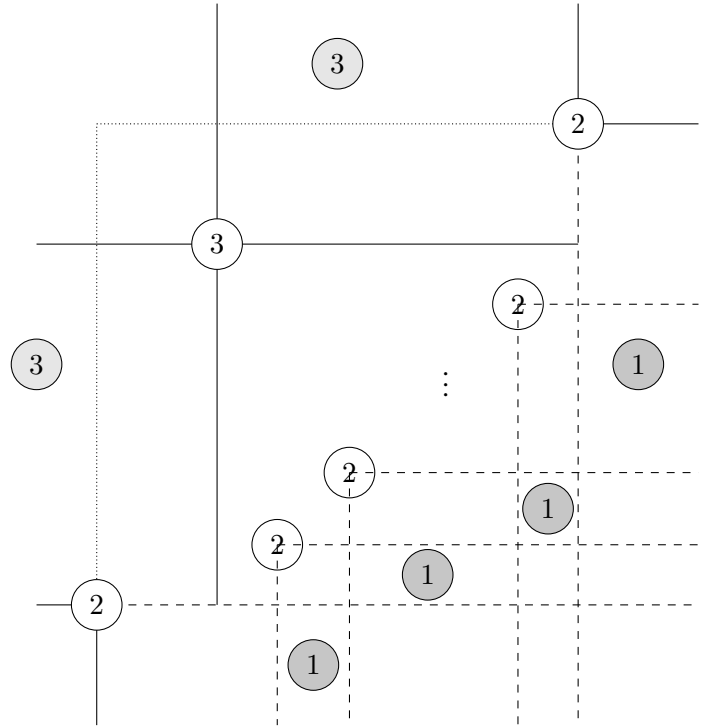

\bigskip

\begin{lemma}\label{l:cinr1}
If $(p, \pi)$ is feasible and contains a tight corner then $t_1$ or $t_2$ is not
a ray of $\mathcal{C}^3(p, \pi)$.
\end{lemma}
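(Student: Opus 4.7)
My plan is to derive from the tight corner a linear inequality, valid on every point of $\mathcal{C}^3(p,\pi)$, that couples $t_1$ and $t_2$ so tightly that one of them cannot grow independently of the other. I fix the tight corner $(i,j,k)$ with $p_i,p_j$ on strand~$2$ (WLOG $i<j$, and hence $p_i<p_j$ since two strand-$2$ representatives must be increasing in $w$) and $p_k$ on strand~$1$ or $3$ inside the square $[i,j]\times[p_i,p_j]$. Using the symmetry swapping strands $1\leftrightarrow 3$ (which interchanges chained below and chained above), I focus on the chained-below case and write the chain as $p_i=p_{i_0},p_{i_1},\ldots,p_{i_r}=p_j$ with strand-$1$ witnesses $p_{m_s}$ lying below-right of each consecutive link $(p_{i_s},p_{i_{s+1}})$.

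The first step is to establish a chain upper bound. For any $(t,c)\in\mathcal{C}^3(p,\pi)$ let $W_s$ denote the window index of the representative of $p_{i_s}$ in the realizing affine permutation. The witness $p_{m_s}$ must lie positionally to the right of $p_{i_{s+1}}$ and take a value strictly less than that of $p_{i_s}$, which by Lemma~\ref{l:ztot} and the identity $v_2-v_1 = 3t_1 + O(1)$ gives $W_{s+1}+1 \leq W_{m_s} \leq W_s + t_1 + O(1)$, whence $W_{s+1}-W_s \leq t_1 + O(1)$. Summing along the $r$ chain links yields
\[ c_i + c_{i+1} + \cdots + c_{j-1} \;=\; W_j - W_i \;\leq\; r\cdot\bigl(t_1 + O(1)\bigr). \]

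Next I exploit the interior element $p_k$. If $p_k$ is on strand~$3$, the value sandwich $v_2+3W_i < v_3+3W_k < v_2+3W_j$ combined with $v_3-v_2 = 3t_2+O(1)$ forces $W_j-W_i \geq W_j-W_k \geq t_2 + O(1)$; combining with the chain bound gives $t_2 \leq r\cdot t_1 + O(1)$, so $t_2$ cannot be an isolated ray. If instead $p_k$ is on strand~$1$, the inequality $v_1+3W_k > v_2+3W_i$ forces $W_k-W_i \geq t_1 + O(1)$, which alone produces no bound on $t_2$. To close this last case I would invoke the three-strand hypothesis implicit in the definitions preceding the lemma: $p$ contains a strand-$3$ element $p_\ell$, and a case analysis on the position of $p_\ell$ relative to the tight corner should show that either $p_\ell$ lies inside the square (reducing to the strand-$3$ interior case) or $p_\ell$ together with the existing chain produces a chained-above configuration yielding the dual coupling $t_1 \leq O(t_2)$.

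The main obstacle I foresee is precisely the strand-$1$ interior case, where the chained-below structure alone does not couple $t_2$ to $t_1$. I expect the case analysis locating a usable strand-$3$ element and verifying the corresponding chained-above configuration to be the most delicate part; in particular, handling the subcase in which every strand-$3$ element lies outside the square and to the lower-right of $p_j$ (so that no chained-above structure is visible) may require an auxiliary combinatorial lemma about how strand-$3$ elements must interact with a chained-below configuration in a feasible three-strand pattern.
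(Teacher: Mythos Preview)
Your chain-bound step and the strand-$3$ interior case are exactly the paper's argument, just written out in coordinates; the paper's entire proof is four sentences: assume WLOG chained below, fix $t_1$, observe that this bounds the positional width of the chained strand-$2$ entries, and then ``we cannot increase $t_2$ past the distance limited by the strand $3$ entry that is in the tight corner.''

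The case you flag as the obstacle---chained below with a strand-$1$ interior element---is simply not treated in the paper. The proof refers explicitly to ``the strand $3$ entry that is in the tight corner,'' and Figures~\ref{f:min_schem} and~\ref{f:generalform} (which the definition points to) only depict chained-below configurations with a strand-$3$ interior and strand-$1$ chaining witnesses. So the paper is either reading its own definition of ``tight corner'' as pairing chained-below with strand-$3$ interior (and dually chained-above with strand-$1$ interior), or it shares the gap you identified. Either way, the auxiliary program you outline---locating some other strand-$3$ element of $p$ and building a chained-above configuration from it---is not part of the intended proof and is not needed to match the paper. Your strand-$3$ case alone already reproduces the paper's argument; the remaining worry is about the precision of the definition, not about a missing idea in your proof.
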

\begin{proof}
Suppose $(p, \pi)$ has a tight corner.  Without loss of generality, we may
assume it is chained below as shown in the figures.  Then, we claim that $t_2$
is not a ray.  If it were, we could fix $t_1$ and increase $t_2$ arbitrarily.
However, once $t_1$ is fixed, there is a maximum width for the strand $2$
entries that are chained.  Then we cannot increase $t_2$ past the distance
limited by the strand $3$ entry that is in the tight corner.
\end{proof}

\begin{lemma}\label{l:fir2}
If $(p, \pi)$ is feasible and does not contain a tight corner then both $t_1$
and $t_2$ are rays of $\mathcal{C}^3(p, \pi)$.
\end{lemma}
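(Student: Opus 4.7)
The plan is to prove Lemma~\ref{l:fir2} by exhibiting, for each feasible $(p,\pi)$ with no tight corner, explicit ray directions along $t_1$ and $t_2$ in the polyhedron $\mathcal{C}^3(p,\pi)$. Since this polyhedron is obtained by projecting the rational polyhedron from the proof of Theorem~\ref{t:polyhedron} onto the $t$-coordinates, it suffices to produce, for each $i\in\{1,2\}$, a nonnegative vector $d=(d_1,\ldots,d_{k-1})$ in the $c$-coordinates such that $(e_{t_i}, d)$ is a ray of the pre-projection polyhedron. Testing this ray condition against the listed upshift and downshift inequalities reduces the problem to a purely combinatorial question about choosing $d$.

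The conditions on $d$ split naturally. For each downshift whose $t$-LHS contains $t_i$, the sum of $d_l$'s over the corresponding window-positions must be at least $1$; for each upshift whose $t$-LHS contains $t_i$, this sum must be at most $1$; and for each upshift whose $t$-LHS does not contain $t_i$, the sum must equal $0$. Specializing to $t_2$ in $n=3$, the relevant shifts are exactly those bridging strand~$3$ with strands~$1$ or~$2$. My construction sets $d_l=1$ precisely at those window positions $l$ that separate two consecutive pattern entries realizing such a bridge, and $d_l=0$ elsewhere. With this choice the downshift condition is immediate, and the ``$\leq 1$'' condition for upshifts involving $t_2$ holds because two consecutive bridging pairs cannot overlap.

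The subtle requirement is that no upshift \emph{avoiding} $t_2$ has any $d_l=1$ inside its positional range. Such an upshift uses two entries both on strands $\{1,2\}$ or both on strands $\{2,3\}$. If some $d_l=1$ lay strictly between them, there would be a consecutive strand-$3$/strand-$\leq 2$ crossing nested inside, and by tracing along a maximal chain of consecutive strand-$2$ entries appearing in that range one extracts a pair of strand-$2$ entries that are chained below (or chained above), with the outer strand-$1$ or strand-$3$ endpoint of the upshift playing the role of the enclosed point $p_k$ of a tight corner in Figure~\ref{f:generalform}. This configuration is forbidden by hypothesis, so the constructed $d$ is consistent with every inequality. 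The argument for $t_1$ is symmetric under the involution exchanging strand~$1$ with strand~$3$, upshifts with downshifts, and ``below'' with ``above.''

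The main obstacle will be making the last paragraph precise: one must show that any violation of the ``sum equals $0$'' condition by the constructed $d$ really does witness a tight corner, which in turn requires a careful induction along maximal chains of consecutive strand-$2$ entries and a case split on whether the obstructing upshift uses strand $1$ or strand $3$. Once this bookkeeping is complete the ray directions are certified, so both $e_{t_1}$ and $e_{t_2}$ lie in the recession cone of $\mathcal{C}^3(p,\pi)$ and the lemma follows.
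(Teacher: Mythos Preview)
Your approach is genuinely different from the paper's: the paper argues the contrapositive in a brief, geometric way (``if $t_2$ is not a ray, then some strand-$2$ entries cannot slide apart, which forces a tight corner''), whereas you attempt a direct algebraic construction of an explicit recession direction $(e_{t_i},d)$ in the pre-projection polyhedron of Theorem~\ref{t:polyhedron}.  That strategy is sound in principle and would give a more explicit proof than the paper's.

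However, the particular $d$ you propose does not work, and the gap is exactly in the step you dismiss as easy.  You assert that the ``$\le 1$'' condition for upshifts whose $t$-side contains $t_2$ holds ``because two consecutive bridging pairs cannot overlap.''  This is false.  Take $p=[4,1,5,2,3]$ with strand assignment $\pi=[3,1,3,1,2]$.  This pair is feasible (the only shift is the upshift $(j,i)=(1,5)$ with inequality $t_2\ge c_1+c_2+c_3+c_4$, and $t_2=2$, $c=(1,0,1,0)$ is a solution), and it has \emph{no} tight corner because there is only a single strand-$2$ entry.  Yet your rule sets $d_l=1$ at each position where consecutive strands cross the $\{3\}$ versus $\{1,2\}$ boundary, giving $d_1=d_2=d_3=1$ and hence $\sum_{l=1}^{4}d_l=3>1$, violating the upshift constraint.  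A valid recession direction does exist here (indeed $d=0$ works), but your construction does not find it.

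So the reduction to the three displayed conditions on $d$ is correct, but the explicit $d$ must be chosen more carefully, and the ``no tight corner'' hypothesis is needed already for the $\le 1$ condition, not only for the ``$=0$'' condition you flag as the main obstacle.  One way to salvage the argument is to abandon a uniform formula for $d$ and instead prove feasibility of the linear system in $d$ via LP duality: show that any Farkas certificate of infeasibility assembles into a tight corner.  This is closer in spirit to the paper's contrapositive, but carried out at the level of the explicit inequalities.
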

\begin{proof}
We argue the contrapositive.  Suppose, without loss of generality, that whenever $t_1$ is fixed there are only finitely many values for $t_2$.  If there were no corner of $2$ entries enclosing a strand $3$ entry, then we could separate strands $2$ and $3$, increasing $t_2$ arbitrarily.  If the $2$ entries defining the corner were not chained, then we could slide them along their strand and thereby increase $t_2$ arbitrarily.  Therefore, we must have a tight corner.  
\end{proof}

\begin{theorem}\label{t:ptc}
The pattern $p$ is periodic in $\widetilde{S}_3^\circ$ if and only if there exists a strand assignment $\pi$ that is feasible and does not contain a tight corner.
\end{theorem}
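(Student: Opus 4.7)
The plan is to combine Corollary~\ref{c:3rays} with the two structural Lemmas~\ref{l:cinr1} and~\ref{l:fir2}, which together establish that for a feasible strand assignment $\pi$, the polyhedron $\mathcal{C}^3(p,\pi)$ has both $t_1$ and $t_2$ as rays if and only if $(p,\pi)$ contains no tight corner. Under this equivalence, the theorem reduces to showing that ``some individual $\mathcal{C}^3(p,\pi)$ has both rays'' is the same as ``the union $\bigcup_{\pi}\mathcal{C}^3(p,\pi)$ has both rays,'' the latter being periodicity by Corollary~\ref{c:3rays}.

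The easy direction $(\Leftarrow)$ is immediate: given a feasible $\pi$ with no tight corner, Lemma~\ref{l:fir2} furnishes both $t_1$ and $t_2$ as rays of $\mathcal{C}^3(p,\pi)$. These are then rays of the union $\bigcup_{\pi}\mathcal{C}^3(p,\pi)$, so Corollary~\ref{c:3rays} yields that $p$ is periodic.

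For the reverse direction I would argue by contrapositive: assume every feasible $\pi$ contains a tight corner, and deduce that $p$ is not periodic. By Lemma~\ref{l:cinr1}, each such $\mathcal{C}^3(p,\pi)$ is missing at least one of the $t_1$- or $t_2$-rays; more precisely, inspecting the proof of that lemma, a chained-below tight corner forbids the $t_2$-ray while a chained-above tight corner forbids the $t_1$-ray. In the \emph{homogeneous} subcase where every feasible $\pi$ has its tight corner chained below (or, symmetrically, every chained above), no summand of the union has $t_2$ (respectively $t_1$) as a ray, hence neither does the finite union, and Corollary~\ref{c:3rays} completes the contrapositive.

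The main obstacle is the \emph{mixed} subcase, in which some feasible $\pi_1$ has only a chained-below tight corner (missing $t_2$-ray) while some feasible $\pi_2$ has only a chained-above tight corner (missing $t_1$-ray). Here each individual polyhedron loses exactly one ray, so a priori the union could still contain both; simple pigeonhole on a covered shifted quadrant does not suffice because a rational quadrant can be partitioned by a diagonal into two rational polyhedra each with only one ray. To rule this scenario out, I would exploit the combinatorial content of the two tight corners: the chained-below corner of $\pi_1$ singles out a strand-$1$ element trapped between strand-$2$ elements that are linked below by strand-$1$ witnesses, while the chained-above corner of $\pi_2$ highlights a strand-$3$ element analogously trapped. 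I would attempt to construct a third feasible strand assignment $\pi_3$ by reassigning the corner element of $\pi_1$ (pushing the offending strand-$1$ point onto strand $2$, or splitting a chain of strand-$2$ points so that the witness is absorbed) and verify that $\pi_3$ is both feasible and free of tight corners of either type, contradicting the hypothesis. If such a universal re-shuffling is not always available, the fallback is a finite case analysis built on the minimal tight-corner templates of Figures~\ref{f:min_schem} and~\ref{f:generalform}, together with Crites' constraint that $p$ have exactly three strands, to show that a simultaneously chained-below and chained-above obstruction in distinct $\pi$'s forces a common sub-pattern from which a tight-corner-free $\pi$ can always be extracted. Either route closes the contrapositive and establishes the theorem.
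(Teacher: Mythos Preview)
Your $(\Leftarrow)$ direction matches the paper exactly, and for $(\Rightarrow)$ you have correctly isolated the mixed subcase as the only real obstacle. But your treatment of that subcase is a sketch of two possible strategies rather than an argument, and neither is carried through; as written this is a genuine gap.

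The paper closes the gap not by constructing a tight-corner-free $\pi_3$, but by showing the mixed subcase simply cannot occur. The mechanism is the ``supporting entries'' drawn in light gray in Figures~\ref{f:min_schem} and~\ref{f:generalform}. Fix a feasible $\pi$ with, say, a chained-below tight corner. If the supporting entries were \emph{absent} from $p$, one could reassign strands at those positions and dissolve the tight corner of $\pi$ --- contradicting the standing hypothesis that every feasible assignment has one. Hence the supporting entries are present in $p$. But their presence forces the strand labels of the tight-corner entries in \emph{every} strand assignment (a strand-$3$ entry northwest of a point forces that point off strand $3$; a strand-$1$ entry southeast forces it off strand $1$, etc.). Thus every feasible $\pi'$ carries this same chained-below tight corner, and by Lemma~\ref{l:cinr1} every $\mathcal{C}^3(p,\pi')$ is missing the $t_2$-ray. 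Now Corollary~\ref{c:3rays} finishes the contrapositive.

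Your instinct to ``reassign the corner element'' is the contrapositive of this same idea, but you did not identify what blocks the reassignment: it is exactly the supporting entries, and once you name them the argument becomes one line rather than a case analysis. (One small omission: you should also note the vacuous case where no $\pi$ is feasible at all; then the union is empty, has no rays, and Corollary~\ref{c:3rays} still gives non-periodicity --- the paper phrases this as ``the enumeration is Bott's formula.'')
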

\begin{proof}
First suppose there exists such a strand assignment.  Then, Lemma~\ref{l:fir2}
and Corollary~\ref{c:3rays} imply the result.
    
Next, suppose that no such strand assignment exists.  If this is because no
$\pi$ is feasible, then $p$ is not periodic since the enumeration is given by
Bott's formula.  

So suppose that every feasible $\pi$ has a tight corner.  We show that they all
contain the same type of tight corner (i.e. are all chained above, or chained
below).  Fix some feasible $\pi$ and consider the ``supporting entries'' shown
in light gray in the figures.  If these entries were not present in $p$, it
would be possible to modify the strand assignment $\pi$ to get rid of the tight
corner, a contradiction.

Hence, the supporting entries must be present in $p$.  But this implies that
the strand assignments for the entries of the tight corner are forced in {\em
every} strand assignment.  Therefore, no $\pi'$ can contain the ray that is
missing due to the tight corner of $\pi$ and Lemma~\ref{l:cinr1}.  
Thus, $p$ is not periodic by Corollary~\ref{c:3rays}.
\end{proof}

Finally, we complete the periodic classification for $n > 3$.

\begin{theorem}
Let $n > 3$.  If $p$ is not periodic in $\widetilde{S}_3^\circ$ then $p$ is not periodic in $\widetilde{S}_n^\circ$.
\end{theorem}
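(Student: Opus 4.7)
The plan is to split the argument by the number of strands of $p$. If $p$ has fewer than three strands then Crites' theorem shows $p$ is finitely enumerated in every $\widetilde{S}_m$ and hence not periodic. If $p$ has more than three strands then the preceding Proposition shows $p$ is unbounded in every $\widetilde{S}_m^\circ$ and hence not periodic. The substantive case is therefore that $p$ has exactly three strands and is unbounded in $\widetilde{S}_3^\circ$.

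For that case I would argue polyhedrally. Corollary~\ref{c:3rays} tells us that $\bigcup_{\tilde{\pi}}\mathcal{C}^3(p,\tilde{\pi})$ is missing at least one of the coordinate rays, and I will treat the case that the $t_2$-ray is missing (the other case is handled symmetrically using a point of the form $(t_1^*,t_2^*,0,\ldots,0)$ in place of the one below).  Because the union is finite and each polyhedron individually lacks this ray, there is a constant $M$ with $(0,t)\in\bigcup_{\tilde{\pi}}\mathcal{C}^3(p,\tilde{\pi})$ forcing $t\leq M$; and by non-periodicity (via Lemma~\ref{l:periodic_classification} applied in $\widetilde{S}_3^\circ$) one may choose $(t_1^*,t_2^*)\notin\bigcup_{\tilde{\pi}}\mathcal{C}^3(p,\tilde{\pi})$ with both coordinates larger than any prescribed bound.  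In $\widetilde{S}_n^\circ$ I will consider the lattice point $u=(0,\ldots,0,t_1^*,t_2^*)\in\mathbb{Z}_{\geq 0}^{n-1}$ in the $(b_0,v_0)$-cone and try to show $u\notin\bigcup_{\pi}\mathcal{C}^n(p,\pi)$; once established, Lemma~\ref{l:periodic_classification} applied in $\widetilde{S}_n^\circ$ gives that $p$ is not periodic there, because the coordinates $t_{n-2}$ and $t_{n-1}$ can simultaneously be made arbitrarily large among $p$-avoiders.

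The crucial reduction, read off from the proof of Theorem~\ref{t:polyhedron}, is that if $\pi$ takes strand values $\{s_1<\cdots<s_r\}\subseteq\{1,\ldots,n\}$ then every inequality defining $\mathcal{C}^n(p,\pi)$ involves the $t_i$ only through the partial sums $T_\alpha := t_{s_\alpha}+\cdots+t_{s_{\alpha+1}-1}$, and all bias floors $\lfloor\bar{b}\rfloor$ vanish for $b_0$; when $r=3$ these inequalities are precisely those cutting out $\mathcal{C}^3(p,\tilde{\pi})$ for the relabeling $\tilde{\pi}$ of $\pi$.  A case analysis on $(s_{r-1},s_r)$ shows that at $u$ the tuple $(T_1,\ldots,T_{r-1})$ takes one of the forms $(0,\ldots,0)$, $(0,\ldots,0,t_1^*)$, $(0,\ldots,0,t_1^*+t_2^*)$, or $(0,\ldots,0,t_1^*,t_2^*)$, according to whether $s_r\leq n-2$; $s_r=n-1$; $s_r=n$ with $s_{r-1}\leq n-2$; or $s_r=n$ with $s_{r-1}=n-1$.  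For $r=3$ these reduce respectively to $(0,0)$, $(0,t_1^*)$, $(0,t_1^*+t_2^*)$, and $(t_1^*,t_2^*)$, and each lies outside $\bigcup_{\tilde{\pi}}\mathcal{C}^3(p,\tilde{\pi})$: the first because the identity element of $\widetilde{S}_3$ is strictly increasing and therefore avoids every pattern with at least two strands, the next two because both lie on the $t_2$-axis beyond $M$, and the fourth by our choice of $(t_1^*,t_2^*)$.

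The main obstacle I expect is closing out the case $r\geq 4$.  Here the plan is to argue that the zero partial sums in the first $r-2$ entries of $(T_1,\ldots,T_{r-1})$ force upshift inequalities of the form $0\geq c_j+\cdots+c_{i-1}$ while the initial conditions force at least one of the summands $c_\ell$ to be bounded below by $1$, producing an inconsistency.  The cleanest way to package this would be a lemma asserting that $\mathcal{C}^n(p,\pi)=\emptyset$ whenever $p$ has exactly three strands and $\pi$ uses more than three strand values, which would reduce everything to the $r=3$ analysis already completed.  The emptiness phenomenon is already visible in small examples such as $(p,\pi)=([4213],[4,3,2,1])$, where combining the available upshift and downshift bounds forces $c_1\leq -1$.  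Once this is in hand the case analysis confirms $u\notin\bigcup_{\pi}\mathcal{C}^n(p,\pi)$ and the theorem follows.
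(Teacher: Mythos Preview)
Your overall strategy coincides with the paper's: both arguments reduce to showing that the embedded point $u=(0,\ldots,0,t_1^*,t_2^*)$ cannot lie in any $\mathcal{C}^n(p,\pi)$, and your $r=3$ case analysis via the partial sums $T_\alpha$ is exactly the right reduction. The gap is in $r\geq 4$. The lemma you propose as the ``cleanest packaging''---that $\mathcal{C}^n(p,\pi)=\emptyset$ whenever $p$ has exactly three strands and $\pi$ uses four or more strand values---is false. Take $p=[3,2,4,1]$, which has three strands, and $\pi=[3,2,4,1]$ with $n\geq 4$: the defining inequalities are $t_1\geq c_2+c_3$ and $t_2\geq c_1$, together with $c_1,c_3\geq 1$ and $c_2\geq 0$, and this system is certainly satisfiable. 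Your fallback plan (that the zero partial sums force an inconsistent upshift bound at $u$) also fails as stated, because there need not be any upshift whose higher strand lies among $s_1,\ldots,s_{r-2}$. With $p=[3,2,4,1]$ and $\pi=[n,n-1,2,1]$, both upshifts have higher strand $n-1$ or $n$; the infeasibility of $u$ for this $\pi$ actually comes from the \emph{downshift} inequality combined with the two upshift bounds, not from your mechanism.

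The paper handles this differently: rather than eliminating $r\geq 4$, it reduces it to $r\leq 3$. If $u$ is feasible for some $\pi$, the vanishing of the first $n-3$ coordinates forces every upshift to have its higher strand in $\{n-1,n\}$ (this is precisely your zero-partial-sum observation, used as a structural constraint on feasible $\pi$ rather than as a direct source of contradiction). One then collapses all strand values $\leq n-2$ to the single value $n-2$, producing an assignment $\pi'$ using only values in $\{n-2,n-1,n\}$, for which the paper argues $u$ remains feasible. Relabeling gives a three-strand assignment $\pi''$ with $(t_1^*,t_2^*)\in\mathcal{C}^3(p,\pi'')$, contradicting the choice of $(t_1^*,t_2^*)$. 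So the missing idea is this collapse step: convert your observation about upshifts into a reduction to the already-handled three-strand case instead of seeking a direct inconsistency.
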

\begin{proof}
Suppose for the sake of contradiction that $p$ is not periodic in
$\widetilde{S}_3^\circ$, but $p$ is periodic in $\widetilde{S}_n^\circ$.  Then,
the values of the enumerating sequence for the $p$-avoiding elements of
$\widetilde{S}_n^\circ$ are bounded by some value $B$.

We embed points $(t_1, t_2)$ from $\widetilde{S}_3^\circ$ into
$\widetilde{S}_n^\circ$ by appending zeros on the left.  
After dilation, the length formula for these points is
\[ \ell(0, 0, \ldots, 0, t_1, t_2) = (n-1) t_2 + 2(n-2) t_1. \]
Hence, the points having fixed length $k$ in $\widetilde{S}_n^\circ$ satisfy
\[ t_2 = -\frac{2(n-2)}{n-1} t_1 + \frac{1}{n-1}k. \]

Since $p$ is not periodic in $\widetilde{S}_3^\circ$, we have that
$\bigcup_{\pi} \mathcal{C}^3(p, \pi)$ does not contain rays in both the $t_1$
and $t_2$ directions.  Since the points of $\bigcup_{\pi} \mathcal{C}^3(p, \pi)$
must lie in the nonnegative quadrant, this implies that some ray with positive
slope separates the $p$-containing lattice points from the $p$-avoiding lattice
points in $\mathbb{Z}_{\geq 0}^2$.

Because the embedded points having fixed length $k$ in $\widetilde{S}_n^\circ$
have negative rational slope, there must eventually be some large
value of $k$ for which we obtain more than $B$ distinct points $(t_1, t_2)$
such that:
\begin{enumerate}
    \item[(1)]  $(0, \ldots, 0, t_1, t_2)$ has length $k$ in
        $\widetilde{S}_n^\circ$.
    \item[(2)]  $(t_1, t_2)$ avoids $p$ in $\widetilde{S}_3^\circ$.
\end{enumerate}

Since there are more than $B$ of these embedded points, some of them must
contain $p$.  So suppose $t = (0, \ldots, 0, t_1, t_2)$ contains $p$.  Then
there exists some strand assignment $\pi$ for which $t$ is feasible in
$\mathcal{C}^n(p, \pi)$.  Note that since the first $n-3$ coordinates are zero,
we cannot have any upshifts involving strands $\{1, 2, \ldots, n-2\}$.
Therefore, we can form a strand assignment $\pi'$ by changing all of these
values to $n-2$, and the point $t$ will still be feasible for $\mathcal{C}^n(p,
\pi')$.  But then the point $t$ would also have been feasible for
$\mathcal{C}^3(p, \pi'')$, where $\pi''$ is obtained from $\pi'$ by sending
$n-2$ to $1$, $n-1$ to $2$ and $n$ to $3$.  This contradicts (2) above.
\end{proof}

\begin{theorem}
Let $n > 3$.  If $p$ is periodic in $\widetilde{S}_3^\circ$ then $p$ is periodic in
$\widetilde{S}_n^\circ$.
\end{theorem}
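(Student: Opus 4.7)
The plan is to invoke Lemma~\ref{l:periodic_classification}, so the goal reduces to producing a constant $B$ for which $\bigcup_{\pi} \mathcal{C}^n(p,\pi)$ contains every point of $\mathbb{Z}_{\geq 0}^{n-1}$ having two or more coordinates larger than $B$. I would start by applying Theorem~\ref{t:ptc} to the hypothesis: since $p$ is periodic in $\widetilde{S}_3^\circ$, there exists a strand assignment $\pi$ on three strands that is feasible and has no tight corner. Lemma~\ref{l:fir2} then gives that both $t_1$ and $t_2$ are rays of $\mathcal{C}^3(p,\pi)$. Because $\mathcal{C}^3(p,\pi)$ is a nonempty rational polyhedron in $\mathbb{Z}_{\geq 0}^2$ whose recession cone contains the entire nonnegative quadrant, translating any fixed lattice point of $\mathcal{C}^3(p,\pi)$ shows that $\mathcal{C}^3(p,\pi) \supseteq (a,b)+\mathbb{Z}_{\geq 0}^2$ for some constants $a,b \geq 0$.

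Next, for every pair $1 \leq i < j \leq n-1$, I would lift $\pi$ to a strand assignment $\pi'_{i,j}$ on $n$ strands by composing with the order-preserving injection $\{1,2,3\} \hookrightarrow \{1,\ldots,n\}$ sending $1 \mapsto 1$, $2 \mapsto i+1$, $3 \mapsto j+1$. Inversions, the flattening-initial conditions, and up/down-shift types are preserved under this order-preserving relabeling (so $\pi'_{i,j}$ inherits feasibility from $\pi$). Reading the inequalities of Theorem~\ref{t:polyhedron} with default bias and flattening, the only way the $t$-coordinates of $\mathcal{C}^n(p,\pi'_{i,j})$ enter the defining system is through the two partial sums $u := t_1 + \cdots + t_i$ and $v := t_{i+1} + \cdots + t_j$, which play the roles of $t_1$ and $t_2$ from the $n=3$ system. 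Consequently $\mathcal{C}^n(p,\pi'_{i,j}) = \{t \in \mathbb{Z}_{\geq 0}^{n-1} : (u,v) \in \mathcal{C}^3(p,\pi)\}$.

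In particular, any $t$ with $t_i \geq a$ and $t_j \geq b$ automatically satisfies $u \geq a$ and $v \geq b$, so the shifted orthant from the first paragraph guarantees $(u,v) \in \mathcal{C}^3(p,\pi)$ and hence $t \in \mathcal{C}^n(p,\pi'_{i,j})$. Setting $B := \max(a,b)$, every point $t \in \mathbb{Z}_{\geq 0}^{n-1}$ with two coordinates larger than $B$ lies in some $\mathcal{C}^n(p,\pi'_{i,j}) \subseteq \bigcup_\pi \mathcal{C}^n(p,\pi)$, so Lemma~\ref{l:periodic_classification} finishes the argument.

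The main technical step is the verification in the second paragraph that the full system of defining inequalities for $\mathcal{C}^n(p,\pi'_{i,j})$ collapses, via the substitution $(t_1,t_2) \leftrightarrow (u,v)$, to the defining system of $\mathcal{C}^3(p,\pi)$. This requires handling each up-shift, down-shift, and $c$-initial-condition separately, and it hinges on the chosen lift being order-preserving so that the inversion structure and strand orderings are faithfully reproduced; the remaining $t_k$ with $k \notin \{1,\ldots,j\}$ appear only through their nonnegativity constraints and so behave like free rays, which is why the resulting cone has coordinate rays in every direction needed to cover points with any two large coordinates.
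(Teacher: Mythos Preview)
Your argument is correct and follows essentially the same route as the paper: both proofs invoke Theorem~\ref{t:ptc} and Lemma~\ref{l:fir2} to obtain a three-strand assignment whose polyhedron contains a shifted nonnegative quadrant, then lift it to $n$ strands via an order-preserving injection so that the upshift/downshift inequalities transform by replacing $t_1,t_2$ with suitable partial sums of the $t$-coordinates, and finally conclude via Lemma~\ref{l:periodic_classification}. The only cosmetic difference is the particular injection chosen---you send $\{1,2,3\}$ to $\{1,i+1,j+1\}$ (one lift per pair of large coordinates), whereas the paper uses $\{1,j,n\}$ (a single lift depending only on the larger index); both choices make the relevant partial sums dominate the required threshold for the same reason.
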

\begin{proof}
By Theorem~\ref{t:ptc}, there exists a feasible strand assignment $\pi$ whose 
solutions include both $t_1$ and $t_2$ as rays, so the solutions include all
points having both coordinates larger than some $B'$.  Let $t' = (t_1, \ldots,
t_{n-1}) \in \bigcup_{\pi} \mathcal{C}^n(p, \pi)$ with two coordinates
larger than $B'$, say $t_i$ and $t_j$.  

Given $\pi$, we can form $\pi'$ from $\pi$ by preserving the images equal to $1$,
replacing the images equal to $2$ by $j$, and replacing the images
equal to $3$ by $n$.
This has the effect of replacing every instance of $t_1$ by $t_1 + \cdots + t_{j-1}$ and every instance of $t_2$ by $t_j + \cdots + t_{n-1}$ in the upshift and downshift inequalities from the proof of Theorem~\ref{t:polyhedron}.

Hence, because the solutions for $\pi$ include all points with $t_1 > B'$ and $t_2 > B'$, the solutions for $\pi'$ include all points
\[ t_1 + \cdots + t_i + \cdots + t_{j-1} > B' \]
and
\[ t_{j} + \cdots + t_{n-1} > B'. \]
Therefore, $t'$ is a feasible point for $\pi'$.

Thus, $\widetilde{S}_n^\circ$ satisfies Lemma~\ref{l:periodic_classification}, so is
periodic.
\end{proof}

\bigskip
\section*{Acknowledgements}

We are grateful to Chris Hanusa, Edwin O'Shea, Len Van Wyk, and Alex Woo for helpful
conversations about this work.  Some problems related to the topic of this
paper were investigated in undergraduate research programs mentored by the
author at James Madison University, and we thank students Traymon Beavers,
Karina Bekova, Abigail Liskey, and Ryan Stees for their contributions.



\def\cprime{$'$}

\end{document}